\documentclass[11pt]{article}

\usepackage[T1]{fontenc}
\usepackage{lmodern}
\usepackage{microtype}
\usepackage{setspace}
\usepackage[margin=0.9in]{geometry}

\usepackage{amsmath,amssymb,amsfonts,amsthm}

\usepackage{cite}
\usepackage{xcolor}
\usepackage[
  pdfusetitle,
  colorlinks=true,
  linkcolor=blue,
  citecolor=blue,
  urlcolor=blue,
  pdfborder={0 0 0}
]{hyperref}
\usepackage{titling}
\usepackage{comment}
\usepackage{booktabs}
\usepackage{etoolbox}
\usepackage{fancyhdr}

\AtBeginEnvironment{thebibliography}{\small}


\pretitle{\begin{flushleft}\Large}
\posttitle{\par\end{flushleft}\vskip 0.5em}

\preauthor{\begin{flushleft}}
\postauthor{%
  \par\vspace{0.5em}\footnotesize
  California Institute of Technology\\
  Email: \href{mailto:looi@caltech.edu}{looi@caltech.edu}
  \par\end{flushleft}
}

\predate{\begin{flushleft}}
\postdate{\par\end{flushleft}}

\theoremstyle{plain}
\newtheorem{theorem}{Theorem}[section]
\newtheorem{proposition}[theorem]{Proposition}
\newtheorem{lemma}[theorem]{Lemma}

\theoremstyle{definition}

\theoremstyle{remark}
\newtheorem{remark}[theorem]{Remark}

\newcommand{\B}{\mathbb{B}}
\newcommand{\C}{\mathbb{C}}

\newcommand{\dv}{\,dv}
\newcommand{\dvgamma}{\,dv_\gamma}

\newcommand{\norm}[1]{\left\lVert #1\right\rVert}
\newcommand{\abs}[1]{\left|#1\right|}
\newcommand{\inner}[2]{\left\langle #1,#2\right\rangle}

\title{\bf When the Berezin transform fails to detect compactness: Toeplitz operators with $L^1$ symbols on weighted Bergman spaces}
\author{{\bf Sam Looi}}
\date{\small\today}

\begin{document}

\maketitle
\begin{abstract}
For every $n\geq1$ and $\gamma>-1$, we construct $f\in L^1(\mathbb B_n,dv_\gamma)$ whose Toeplitz form extends to a bounded, noncompact operator on the weighted Bergman spaces $A^2_\gamma(\mathbb B_n)$ and whose Berezin transform vanishes at the boundary. Boundary vanishing of the Berezin transform therefore does not imply compactness for Toeplitz operators with integrable symbols, in contrast with operators in the Toeplitz algebra generated by bounded symbols; this answers a question of Bauer and Isralowitz. The construction approximates rank-one operators in norm by Toeplitz operators with smooth, compactly supported symbols and transports suitably separated blocks toward the boundary. On the unweighted disk, a diagonal version produces a real symbol with the same vanishing and noncompactness properties, for which the operator is positive and Zorboska's two-sided localization condition holds at $p=3$. Her hypothesis $p>3$ is shown to be sharp.
\end{abstract}

\medskip \noindent\textit{2020 Mathematics Subject Classification:} Primary 47B35; Secondary 30H20, 32A36, 47B07.

\smallskip \noindent\textit{Keywords:} Toeplitz operator; Berezin transform; weighted Bergman space; compact operator.

\section{Introduction}

On Bergman spaces, the normalized reproducing kernels converge weakly to zero as their base points approach the boundary. If $T$ is a compact operator, then compactness forces the diagonal matrix coefficients against these kernels---the Berezin transform---to vanish there. The converse question asks for a scalar family of diagonal coefficients to detect a global operator property. This converse holds throughout the norm-closed algebra generated by Toeplitz operators with bounded symbols \cite{Suarez2007,MitkovskiSuarezWick2013}. It is therefore natural to ask whether boundary vanishing continues to characterize compactness for Toeplitz operators with less regular symbols. 

We show that it does not, even for globally integrable symbols. For every $n\geq1$ and $\gamma>-1$, we construct $f\in L^1(\mathbb B_n,dv_\gamma)$ whose Toeplitz form has a bounded extension $T_f$ on $A^2_\gamma(\mathbb B_n)$ and satisfies \[ B_\gamma(f)(z)\longrightarrow0 \qquad\text{as }|z|\to1, \]
although $T_f$ is not compact. This answers a question of Bauer and Isralowitz \cite[Section~7]{BauerIsralowitz2012}. A separate construction on the unweighted disk has the same vanishing and noncompactness properties, with $f$ real-valued, $T_f$ positive, and Zorboska's two-sided localization condition (recalled in \eqref{eq:zorboska-condition} below) satisfied at $p=3$. Thus the strict hypothesis $p>3$ in her compactness theorem is sharp \cite{Zorboska2003}.

Let $\B_n$ be the unit ball in $\C^n$, where $n\geq 1$, and fix $\gamma>-1$.  If $dv$ denotes Lebesgue measure on $\C^n$, we set
\[
dv_\gamma(z)
=c_\gamma(1-|z|^2)^\gamma\dv(z),
\qquad
c_\gamma=\frac{\Gamma(n+\gamma+1)}
{\pi^n\Gamma(\gamma+1)}.
\]
Thus $dv_\gamma$ is a probability measure.  The weighted Bergman space $A^2_\gamma=A^2_\gamma(\B_n)$ is the closed subspace of $L^2(\B_n,dv_\gamma)$ consisting of holomorphic functions. Let
\[
N=n+1+\gamma.
\]
With the convention that the inner product is linear in the first variable, the reproducing kernel and the normalized reproducing kernel are
\[
K_z(w)=\frac{1}{(1-\inner{w}{z})^N},
\qquad
k_z(w)=\frac{(1-|z|^2)^{N/2}}{(1-\inner{w}{z})^N}.
\]
For a bounded operator $A$ on $A^2_\gamma$, its Berezin transform is
\[
B_\gamma(A)(z)=\inner{Ak_z}{k_z}.
\]
For $f\in L^1(\B_n,dv_\gamma)$, we use the same notation for
\[
B_\gamma(f)(z)
=\int_{\B_n}f(w)|k_z(w)|^2\dvgamma(w).
\]

An $L^1$ symbol need not define a bounded Toeplitz operator by the usual projection formula, so we formulate the definition through a sesquilinear form.  For holomorphic polynomials $p$ and $q$, the Toeplitz form
\[
(p,q)\mapsto\int_{\B_n}f(z)p(z)\overline{q(z)}\dvgamma(z)
\]
is well-defined because polynomials are bounded on $\B_n$.  We say that the Toeplitz form with symbol $f$ has a bounded extension if there is a bounded operator $T_f$ on $A^2_\gamma$ such that
\[
\inner{T_fp}{q}=\int_{\B_n}f(z)p(z)\overline{q(z)}\dvgamma(z)
\]
for all holomorphic polynomials $p,q$.  Such an operator is unique.  In this case Lemma~\ref{lem:l1-covariance} gives
\[
B_\gamma(T_f)=B_\gamma(f),
\]
and also identifies $T_f$ with the usual pointwise kernel formula on bounded holomorphic inputs. For Toeplitz operators with $L^1$ symbols on the ball, see \cite{DieudonneTchoundja2011}; for locally integrable symbols on the disk, see \cite[Theorems~2.3 and~2.6]{TaskinenVirtanen2010}.

Let $\mathcal T_\gamma$ be the norm-closed algebra generated by the Toeplitz operators with bounded symbols.  On the unweighted Bergman space of the disk, Axler and Zheng first proved the relevant equivalence for finite sums of finite products of Toeplitz operators with bounded symbols \cite[Theorem~2.2]{AxlerZheng1998}.  Su\'arez proved it for the full Toeplitz algebra on the unweighted ball, and Mitkovski, Su\'arez, and Wick extended the result to the standard weighted spaces.  Thus, for $A\in\mathcal T_\gamma$,
\[
A\text{ is compact}
\quad\Longleftrightarrow\quad
B_\gamma(A)(z)\longrightarrow 0
\quad\text{as }|z|\to 1
\]
\cite[Theorem~9.5]{Suarez2007} \cite[Theorem~1.1]{MitkovskiSuarezWick2013}. Our main result shows that membership in $\mathcal T_\gamma$ cannot be omitted.

\begin{theorem}\label{thm:main}
For every $n\geq 1$ and $\gamma>-1$, there exist $f\in L^1(\B_n,dv_\gamma)$ and a sequence $(a_m)$ in $\B_n$ with $|a_m|\to 1$ such that the Toeplitz form with symbol $f$ has a bounded extension $T_f$,
\[
B_\gamma(f)(z)\longrightarrow 0  \qquad\text{as }|z|\to 1,
\]
but
\[
\inf_m\,\norm{T_fk_{a_m}}_{A^2_\gamma}>0.
\]
In particular, $T_f$ is not compact.
\end{theorem}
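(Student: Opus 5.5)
The plan follows the abstract: approximate a fixed rank-one operator by Toeplitz operators, transport copies toward the boundary, and glue. The key point is that the Berezin transform of a rank-one operator $u\otimes v$ with $u\perp v$ can be small everywhere while the operator has norm one.

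\textbf{Step 1 (rank-one block).} Fix orthonormal $u,v\in A^2_\gamma$, for instance two distinct normalized monomials. Set $R=u\otimes v$, that is, $R h=\inner{h}{v}u$. Then
\[
B_\gamma(R)(z)=\inner{k_z}{v}\inner{u}{k_z}=(1-|z|^2)^N\,\overline{v(z)}\,u(z),
\]
and $\norm{R}=1$. After a unitary rescaling of $u,v$ (dilating toward the boundary, or passing to highly oscillating monomials $z_1^j$, $z_1^{j+1}$), I will arrange $\sup_z|B_\gamma(R)(z)|\le\varepsilon$ while $\norm{R}=1$. The concrete choice is $u=e_{j}$ and $v=e_{j+1}$, the normalized monomials. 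Then $|B_\gamma(R)(z)|\lesssim \sup_r (1-r^2)^N |e_j(r)e_{j+1}(r)|$, and this tends to $0$ as $j\to\infty$, because $|e_j(z)|^2(1-|z|^2)^N$ is a Berezin-type bump of height $O(j^{-1/2})$ or less.

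Next I use density. Toeplitz operators with symbols in $C_c^\infty(\B_n)$ are norm-dense in the compact operators. Any compact operator is a norm limit of finite-rank ones, and $h\otimes g$ is approximated using the fact that the span of $\{T_\varphi\}$ is weak-$*$ dense and closed under the relevant structure; I take this as the paper's approximation step. So choose $\varphi_m\in C_c^\infty$ with $\norm{T_{\varphi_m}-R_m}\le\varepsilon_m$. This gives
\[
|B_\gamma(\varphi_m)|\le 2\varepsilon_m,\qquad \norm{T_{\varphi_m}}\le 2,\qquad \norm{T_{\varphi_m}k_{b}}\ge c
\]
for some $b$ at which $R_m k_b$ is large. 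The quantity $\norm{\varphi_m}_{L^1}$ may be huge, but it is finite.

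\textbf{Step 2 (transport and separation).} Let $\varphi_a$ be the ball automorphism and $U_a h=(h\circ\varphi_a)\,k_a$ the associated unitary. Then $U_aT_\varphi U_a=T_{\varphi\circ\varphi_a}$ and $B_\gamma(\varphi\circ\varphi_a)=B_\gamma(\varphi)\circ\varphi_a$. Set $f=\sum_m c_m\,\varphi_m\circ\varphi_{a_m}$ with the points $a_m\to\partial\B_n$ chosen inductively. Each summand has compact support in a hyperbolic ball around $a_m$ of radius $\rho_m$, where $\rho_m$ depends on $\varphi_m$. The summand's $L^1(dv_\gamma)$ mass is at most $\norm{\varphi_m}_\infty\,v_\gamma(D(a_m,\rho_m))$, which tends to $0$ as $|a_m|\to1$. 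Pushing $a_m$ far enough out makes the masses summable, so $f\in L^1$.

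\textbf{Step 3 (boundedness and almost orthogonality).} This is the main obstacle. Each block $T_m=T_{\varphi_m\circ\varphi_{a_m}}$ is concentrated near $a_m$. I will show that for $\beta(a_m,a_k)$ large enough,
\[
\norm{T_m^*T_k}+\norm{T_mT_k^*}\le 2^{-m-k},
\]
using the Forelli--Rudin estimates for the kernel of $T_m$ off its support. Cotlar--Stein then shows that $\sum T_m$ converges in norm to a bounded operator, and that this operator agrees with the Toeplitz form on polynomials by dominated convergence, since $f\in L^1$. The difficulty is the tail control: the integral operator with kernel $\int \psi(w)K(z,w)K(w,\zeta)\,dv_\gamma(w)$, with $\psi$ compactly supported near $a_m$, must decay as $z,\zeta$ move away from $a_m$. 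A Schur test with weight $(1-|z|^2)^{-s}$ handles this, provided the separation is chosen after $\varphi_m$ is fixed.

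\textbf{Step 4 (conclusion).} Near a point $z$ with $\beta(z,a_m)$ bounded, $B_\gamma(f)(z)$ equals $B_\gamma(\varphi_m)(\varphi_{a_m}(z))$, which is at most $2\varepsilon_m$, plus the tails of the other blocks, which are small by separation. Since $\varepsilon_m\to0$ and the tail contributions decay, $B_\gamma(f)\to0$. For the lower bound, take the point $\tilde a_m=\varphi_{a_m}(b_m)$. Then
\[
\norm{T_fk_{\tilde a_m}}\ge\norm{T_mk_{\tilde a_m}}-\sum_{k\ne m}\norm{T_kk_{\tilde a_m}}\ge c-o(1),
\]
because $\norm{T_m k_{\tilde a_m}}=\norm{T_{\varphi_m}k_{b_m}}$ by unitarity. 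To keep $c$ uniform, I choose $b_m$ where $|u_m(b_m)|$ and $|\inner{k_{b_m}}{v_m}|$ stay bounded below. If the rescaling in Step 1 destroys this, I will instead fix a single $R$ with small Berezin transform and let only $\varepsilon_m\to0$ through better approximations. Noncompactness then follows because $k_{\tilde a_m}\to0$ weakly.
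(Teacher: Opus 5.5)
There is a genuine gap, and it lies in your choice of the rank-one block in Step~1. For $R=e_j\otimes e_{j+1}$ you have $\norm{Rk_b}=\abs{\inner{k_b}{e_{j+1}}}=(1-\abs{b}^2)^{N/2}\beta_{j+1}\abs{b_1}^{j+1}$. The same Stirling computation that makes $B_\gamma(R)$ small then gives $\sup_{b\in\B_n}\norm{Rk_b}\le C_Nj^{-1/2}$. Hence $\norm{T_{\varphi_m}k_b}\le C_Nj_m^{-1/2}+\varepsilon_m$ for every $b$, and the uniform constant $c$ in Step~4 does not exist: when both vectors are high-degree monomials, every normalized reproducing kernel is nearly invisible to the block.

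Your operator would still be noncompact, as you can see by testing on the weakly null vectors $U_{a_m}e_{j_m+1}$. But the statement demands $\inf_m\norm{T_fk_{a_m}}>0$ along normalized kernels. In fact, with Bessel-separated blocks, compact errors and $\sum_mj_m^{-1}<\infty$, one gets $\norm{T_fk_z}\to0$ as $\abs{z}\to1$, so no choice of $(a_m)$ rescues it.

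Your proposed condition is also self-defeating: you ask that both $\abs{\inner{k_{b_m}}{v_m}}$ and $\abs{\inner{u_m}{k_{b_m}}}=(1-\abs{b_m}^2)^{N/2}\abs{u_m(b_m)}$ stay bounded below, but their product is $\abs{B_\gamma(R_m)(b_m)}$. Dilating by $U_c$ cannot help either, since by covariance it leaves $\sup\abs{B_\gamma(R)}$ unchanged. Finally, the fallback of freezing $R$ fails. For $R\ne0$ the Berezin transform is injective, so $\sup\abs{B_\gamma(R)}=\varepsilon>0$. The transported copies then produce values of size about $\varepsilon$ at the points $\varphi_{a_m}(z_0)\to\partial\B_n$, so $B_\gamma(f)$ cannot vanish at the boundary.

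The missing idea is an asymmetric block in which only one factor of $B_\gamma(R)(z)=\inner{k_z}{v}\inner{u}{k_z}$ is small: take $v=1=k_0$ and $u=e_K$, that is, $R_K=e_K\otimes1$. Then $\abs{B_\gamma(R_K)(z)}=(1-\abs{z}^2)^N\beta_K\abs{z_1}^K\le C_NK^{-(N+1)/2}$, while $R_Kk_0=e_K$ has norm one. After transport the initial vector becomes $U_{a_m}1=k_{a_m}$, which is exactly the test vector in the statement, and $\inner{T_fk_{a_m}}{U_{a_m}e_{K_m}}$ is close to $1$ once the transported vectors are almost orthogonal.

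With this change the rest of your outline is workable.
\begin{itemize}
\item Your density claim for $\{T_\varphi:\varphi\in C_c^\infty(\B_n)\}$ in the compact operators is true. Justify it by Hahn--Banach: $\mathcal K^*=\mathcal S_1$, and a trace-class $S$ annihilating every $T_\varphi$ has vanishing Berezin transform. This is an abstract substitute for the paper's explicit mollified $\partial^K\delta_0/\partial\overline z_1^{K}$ symbol.
\item Step~3 needs no Forelli--Rudin or Schur estimates. Since $\norm{T_m^*T_k}=\norm{T_{\varphi_m}^*U_{a_m}U_{a_k}T_{\varphi_k}}$, it tends to $0$ as $\abs{a_k}\to1$: $U_{a_m}U_{a_k}\to0$ weakly and both outer factors are compact.
\item Cotlar--Stein gives strong, not norm, convergence of $\sum_mT_m$, because each term has norm close to $1$. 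The paper avoids Cotlar--Stein altogether: it splits $T_{f_m}=Q_m+(T_{f_m}-Q_m)$ with norm-summable errors and makes the initial and final vectors of the rank-one pieces $Q_m$ Bessel sequences.
\end{itemize}
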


Indeed, $k_{a_m}\to0$ weakly, so the last estimate implies noncompactness.  The operator $T_f$ in Theorem~\ref{thm:main} necessarily lies outside $\mathcal T_\gamma$; otherwise the preceding compactness theorem would apply.  In particular, the constructed symbol is not essentially bounded.

\subsection*{Relation to the Toeplitz algebra and localization}
For every $(n-1)/(n+1)<s<1$, let $\mathcal A_s$ denote the class of $s$-weakly localized operators on $A^2_0(\B_n)$ introduced in \cite[Definition~1.1]{Xia2015}. Xia proved \[ \mathcal T_0 = \overline{\{T_g:g\in L^\infty(\B_n,dv_0)\}}^{\|\cdot\|} = C^*(\mathcal A_s) ;\]
see \cite[Theorems~1.3 and~1.5]{Xia2015}. Since $\mathcal A_s$ is a $*$-algebra, $C^*(\mathcal A_s)$ is its operator norm closure.
Dewage extended these identities to every weight considered here. If $\mathcal W_{\rm loc}^\alpha(A^2_\gamma)$ denotes Dewage's class of $\alpha$-weakly localized operators, then \[ \mathcal T_\gamma = \overline{\{T_g:g\in L^\infty(\B_n,dv_\gamma)\}}^{\|\cdot\|} = \overline{\mathcal W_{\rm loc}^\alpha(A^2_\gamma)}^{\|\cdot\|}, \qquad \frac{n-1-\gamma}{2}<\alpha<\frac{n+1+\gamma}{2} \] \cite[Theorem~5.9]{Dewage2026}. As a result, the operator in Theorem~\ref{thm:main} is not an operator norm limit of Toeplitz operators with bounded symbol and does not belong to $\overline{\mathcal W_{\rm loc}^\alpha(A^2_\gamma)}^{\|\cdot\|}$ for any admissible $\alpha$. In particular, it is not $\alpha$-weakly localized for any such $\alpha$. 

The construction below nevertheless realizes $T_f$ as the strong operator limit of Toeplitz operators with bounded symbol: $f=\sum_m f_m$ in $L^1$, every finite sum $\sum_{m\leq M}f_m$ is smooth and compactly supported, and \[ T_{\sum_{m\leq M}f_m}\longrightarrow T_f \] strongly but not in operator norm.

\subsection*{Antecedents and the sharpness of Zorboska's criterion}
The Fock space analogue is already contained in a family of examples of Berger and Coburn \cite[Section~6]{BergerCoburn1994}, although they use it only to probe boundedness. On $\mathbb C$ they consider $g_\lambda(z)=e^{\lambda|z|^2}$ with $\operatorname{Re}\lambda<1/4$, for which $T_{g_\lambda}$ is diagonal with eigenvalues $(1-2\lambda)^{-(k+1)}$ and $\tilde g_\lambda^{(1/2)}(w)=(1-2\lambda)^{-1} \exp\{\lambda(1-2\lambda)^{-1}|w|^2\}$. If $1-2\lambda=e^{i\theta}$ with $0<|\theta|<\pi/3$, then $T_{g_\lambda}$ is unitary on the Fock space (hence noncompact on the Fock space), while its Berezin transform decays like $\exp\{-\tfrac12(1-\cos\theta)|w|^2\}$ at infinity.
Bauer and Isralowitz asked whether similar examples exist on the weighted or unweighted Bergman space of the ball, with $f\in L^1(\B_n,dv_\gamma)$ in the weighted case \cite[Section~7]{BauerIsralowitz2012}.  Theorem~\ref{thm:main} answers this question for every $n\geq1$ and $\gamma>-1$. We next recall two positive results of Zorboska on the disk. Theorem~\ref{thm:main} settles one problem she left open, while our second theorem shows that the exponent in her localization criterion is sharp.

On the unweighted Bergman space $A^2(\mathbb D)$, Zorboska proved that boundary vanishing of $B_0(f)$ implies compactness of $T_f$ whenever $f\in\mathrm{BMO}^1(\mathbb D)$ \cite[Theorem~3.1]{Zorboska2003}.  She then proved the same conclusion for $f\in L^1(\mathbb D,dv_0)$ when $T_f$ is bounded and the two bounds in her condition~(4.3) hold.  In our notation, these bounds are
\begin{equation}\label{eq:zorboska-condition}
\sup_{z\in\mathbb D}
\norm{T_{f\circ\varphi_z}1}_{L^p(\mathbb D,dv_0)}
+
\sup_{z\in\mathbb D}
\norm{T_{\overline f\circ\varphi_z}1}_{L^p(\mathbb D,dv_0)}
<\infty,
\end{equation}
where $p>3$ and
\[
\varphi_z(w)=\frac{z-w}{1-\overline z w}
\]
\cite[Theorem~4.2, condition~(4.3)]{Zorboska2003}.  On p.~2944 she observed that compactness was unknown for an arbitrary bounded Toeplitz operator with an $L^1$ symbol and boundary vanishing of the Berezin transform.  She also asked whether \eqref{eq:zorboska-condition} suffices for some $p>2$ \cite[p.~2944, Question]{Zorboska2003}.

The case $n=1$, $\gamma=0$ of Theorem~\ref{thm:main} settles the unrestricted $L^1$ problem negatively: such an operator need not be compact. For general bounded operators, Miao and Zheng had already shown that the endpoint $p=3$ does not suffice \cite[Example~2]{MiaoZheng2004}.  Their operator is the lacunary projection used by Axler and Zheng, which is not a Toeplitz operator with an $L^1$ symbol \cite[p.~2931]{Zorboska2003}.  The next theorem gives a Toeplitz example at the endpoint.

\begin{theorem}\label{thm:zorboska-endpoint}
There exists a real function $f\in L^1(\mathbb D,dv_0)$ whose Toeplitz form has a bounded extension $T_f$ on $A^2(\mathbb D)$.  The operator $T_f$ is positive and noncompact, and
\[
B_0(f)(z)\longrightarrow0  \qquad\text{as }|z|\to1.
\]
Moreover, \eqref{eq:zorboska-condition} holds with $p=3$.  
\end{theorem}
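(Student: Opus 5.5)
The plan is to take a radial symbol. Then $T_f$ is diagonal in the monomial basis, and every condition in the theorem becomes a statement about its eigenvalue sequence. For radial $f(w)=h(|w|^2)$ we have $T_fz^k=\lambda_kz^k$ with
\[
\lambda_k=(k+1)\int_0^1 h(t)\,t^k\,dt ,
\]
and, since $|k_z(w)|^2=(1-|z|^2)^2\sum_k (k+1)|z|^{2k}|w|^{2k}$,
\[
B_0(f)(z)=(1-|z|^2)^2\sum_{k\ge0}(k+1)\lambda_k|z|^{2k}.
\]
So $T_f$ is bounded iff $\sup_k|\lambda_k|<\infty$, positive iff $\lambda_k\ge0$, and noncompact iff $\lambda_k\not\to0$. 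The Berezin transform is a weighted Abel mean of $(\lambda_k)$.

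The target profile is the lacunary one: $\lambda_k\approx1$ in a window around $k_j=2^{2^j}$ (or any sufficiently fast lacunary sequence), and $\lambda_k$ nonnegative and small elsewhere. If the window has relative width $\delta_j\to0$, then at $|z|^2=1-1/K$ the Abel mean of the $j$-th bump is at most about $\delta_j\, (k_j/K)^2e^{-k_j/K}\cdot K/k_j\lesssim\delta_j$. Lacunarity makes the tails from other bumps summable and small. Hence $B_0(f)\to0$, while $\lambda_{k_j}\approx1$ gives noncompactness.

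Concretely, I would build $f=\sum_j f_j$ with $f_j(w)=h_j(|w|^2)$ smooth and supported in an annulus near $|w|^2=1-1/k_j$. Each $h_j$ is chosen so that its eigenvalue sequence $\lambda_k(f_j)=(k+1)\int h_jt^k\,dt$ is nonnegative for all $k$, is $\ge1/2$ at $k=k_j$, and satisfies $\lambda_k(f_j)\le\varepsilon_j$ outside $[k_j(1-\delta_j),k_j(1+\delta_j)]$, with $\sum\varepsilon_j<\infty$. In the variable $t=e^{-s}$, $\lambda_k(f_j)=(k+1)\,\mathcal L[\phi_j](k+1)$ is a Laplace transform. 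I would take $\phi_j$ to be a high-order finite-difference (oscillating) combination of bumps at scale $s\approx1/k_j$, chosen so that its transform is a squared, hence nonnegative, localized bump in $\log k$. For instance, take $\phi_j=\psi_j*\tilde\psi_j$ in a multiplicative sense, so that $\mathcal L[\phi_j]=|\mathcal L[\psi_j]|^2$ on the real axis, or use the rank-one approximation technique from the proof of Theorem~\ref{thm:main} specialized to the monomial $z^{k_j}$.

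The key quantitative point is that the $L^1$ norm can be made summable. Since $\lambda_k(f_j)\le(k+1)\|h_j\|_{L^1}\sup t^k$, a spike of height one at $k_j$ only forces $\|f_j\|_{L^1(dv_0)}\gtrsim1/k_j$. The oscillation needed to shrink $\delta_j$ costs a factor depending on $\delta_j$, which I will pay by letting $\delta_j\to0$ slowly relative to the growth of $k_j$. Positivity of $T_f$ then holds because every $\lambda_k=\sum_j\lambda_k(f_j)\ge0$. Moreover, $f$ is real, lies in $L^1$, and $\sup_k\lambda_k<\infty$ by the lacunary separation.

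For \eqref{eq:zorboska-condition}, note that $f$ is real, so the two terms coincide. The Möbius covariance $T_{f\circ\varphi_z}=U_zT_fU_z$ with $U_z g=(g\circ\varphi_z)\varphi_z'$ gives $T_{f\circ\varphi_z}1=U_z T_f k_z$ up to unimodular constants. I would then bound $\|U_zT_fk_z\|_{L^3}$ by writing $T_fk_z=(1-|z|^2)\sum_k\lambda_k(k+1)\bar z^kw^k$. The terms decompose into lacunary blocks, and Paley-type or $H^p$-multiplier estimates on each block, pushed through the change of variables $\varphi_z$ (whose Jacobian is $|\varphi_z'|^2$), should yield a bound uniform in $z$ exactly at $p=3$. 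This mirrors the mechanism in Miao--Zheng's Example~2, where $p=3$ is the threshold $\int|\varphi_z'|^{3-?}$ that just stays bounded.

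\textbf{Main obstacle.} I expect the hardest step to be the simultaneous control in the construction of $h_j$: exact nonnegativity of all $\lambda_k(f_j)$, a spike whose relative width tends to zero, and $L^1$ cost small enough to be summable. Positive symbols alone give eigenvalue profiles of fixed relative width, which is insufficient. I would first try the squared-transform trick above. If that fails, I would fall back on approximating the rank-one projection onto $z^{k_j}$ in operator norm as in Theorem~\ref{thm:main}, then adding a tiny positive radial correction to restore exact positivity.
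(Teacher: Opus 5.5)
\textbf{There is a genuine gap.} It sits in the part of the statement that carries its content: the $p=3$ bound \eqref{eq:zorboska-condition}. Your appeal to Paley-type estimates and to the Miao--Zheng mechanism fits a lacunary sum of single monomials, but your construction does not produce one. In your plan $\lambda_k(f_j)=G_j((k+1)/k_j)$, where $G_j(y)=y\,\mathcal L[g_j](y)$ is a bump of width $\delta_j$ in $y$. You let $\delta_j\to0$ slowly relative to $k_j$ so that $\norm{f_j}_{L^1}\approx\norm{g_j}_{L^1}/k_j$ is summable. Keeping $\delta_jk_j$ bounded would instead be the radial near-rank-one problem of your fallback.

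Consequently each window contains about $\delta_jk_j\to\infty$ consecutive eigenvalues of size comparable to $1$, and every $\lambda_k(f_j)$ off the window is a nonzero approximation error. For diagonal operators of this kind, $\Lambda_3$ depends on the fine structure of the eigenvalue sequence:
\begin{itemize}
\item A flat window $D=\sum_{K\le k<K+L}P_k$ with $L\le K/10$ already has $\Lambda_3(D)^3\gtrsim\log L$. At $|a|^2=1-1/K$, write $\norm{U_aDU_a1}_{L^3}^3=(1-|a|^2)^{-1}\int_{\mathbb D}|Dk_a|^3|1-\bar av|^2\,dA(v)$; on the annulus $1-|v|\lesssim1/K$, $Dk_a$ behaves like a Dirichlet kernel of length $L$.
\item For $\theta_0\ne0$, the diagonal unitary $R_{\theta_0}=\operatorname{diag}(e^{ik\theta_0})$ satisfies $\norm{U_aR_{\theta_0}U_a1}_{L^3}=\norm{k_c}_{L^3}\approx(1-|c|^2)^{-1/3}$, where $c=\varphi_a(e^{-i\theta_0}a)$ and $|c|\to1$ as $|a|\to1$.
\end{itemize}
So the bound $|\lambda_k(f_j)|\le\varepsilon_j$ off the windows gives no control of $\Lambda_3$. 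To close this step you would need two things. First, approximate smooth bumps in a weighted smoothness norm rather than the sup norm, so that the inverse-Laplace error cannot oscillate despite the large cost $\norm{g_j}_{L^1}$. Second, prove a lemma bounding $\Lambda_3$ for diagonal operators with slowly varying eigenvalue profiles, uniformly in $|a|$ and summably over the blocks. Neither ingredient is in the proposal.

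\textbf{The block construction is also only sketched.} The identity $\mathcal L[\psi*\psi]=\mathcal L[\psi]^2$ gives nonnegativity. You still need a density theorem to produce $\psi$ whose transform approximates $\sqrt{b(y)/y}$ for a narrow bump $b$, with weights controlling $y\to0$ and $y\to\infty$. One option is M\"untz's theorem for $e^{-\sigma y}$ with $\sigma$ in a compact subinterval of $(0,\infty)$.

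Your fallback does not help. The paper's radial approximants $\Delta^K\psi_\epsilon$ of $P_K$ are supported in $\epsilon\mathbb D$, where $\lambda_K\le(K+1)\epsilon^{2K}\norm{h}_{L^1}$ forces an enormous $L^1$ norm. The paper removes this cost only by M\"obius transport, which destroys radiality.

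The paper's route avoids all of this. Each block is exactly rank one up to a positive trace-class error with $\Lambda_3(E)\le C\norm{E}_{\mathcal S_1}$. The only $L^3$ computation is then the explicit single-monomial profile $L_K(r)$, which is bounded uniformly in $K$ precisely at $p=3$. Transport by $\varphi_{b_m}$ makes the $L^1$ masses summable and the transported monomials Bessel. It also makes the active sets $\varphi_{b_m}(C_m)$ disjoint, so at each point at most one block contributes more than $\eta_m$. A radial example would be stronger than what the paper proves, since the paper leaves the radial $L^1$ case open. Your proposal does not yet establish one.
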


Since $dv_0$ is a probability measure, the $L^3$ bounds imply the corresponding $L^p$ bounds for every $2<p\leq3$.  Thus the strict restriction $p>3$ in \cite[Theorem~4.2]{Zorboska2003} is sharp, and moreover the unnumbered question on p.~2944 has a negative answer.

The condition above also has an operator localization interpretation. For Toeplitz operators, automorphism covariance and $T_f^*=T_{\overline f}$ show that the two bounds in \eqref{eq:zorboska-condition}, when they hold for some $p>3$, are what Sadeghi and Zorboska call sufficient localization. They proved that every sufficiently localized operator on $A^2(\mathbb D)$ is weakly localized \cite[Section~3]{SadeghiZorboska2020}. The operator in Theorem~\ref{thm:zorboska-endpoint}, however, is not weakly localized: its Berezin transform vanishes and it is noncompact, so it lies outside $\mathcal T_0$, whereas weakly localized operators belong to $\mathcal T_0$ \cite[Theorem~1.3]{Xia2015}. Thus the strict threshold $p>3$ is also sharp for the implication from the two-sided orbit bounds \eqref{eq:zorboska-condition} to weak localization.

\subsection*{Further consequences}

Per\"al\"a, Taskinen, and Virtanen posed related boundedness and compactness problems for Toeplitz operators with integrable or distributional symbols. Let $\nu(z)=1-|z|^2$, and let $W_\nu^{-m,\infty}(\mathbb D)$ be the weighted negative Sobolev space introduced in \cite[Definition~2.3]{PeralaTaskinenVirtanen2011}. They asked, in particular, whether membership in one of these spaces for some finite $m$ is necessary for boundedness of a Toeplitz operator with an $L^1$ symbol \cite[Problem~1]{PeralaTaskinenVirtanen2011Problems}. Proposition~\ref{prop:sobolev-obstruction} shows that, for $dv_0$ denoting normalized area measure, whenever
\[
g\in L^1(\mathbb D,dv_0)\cap W_\nu^{-m,\infty}(\mathbb D),
\]
the Toeplitz form has a bounded extension on $A^2(\mathbb D)$ and
\[
T_g\text{ is compact on }A^2(\mathbb D)
\quad\Longleftrightarrow\quad
B_0(g)(z)\longrightarrow0
\qquad\text{as }|z|\to 1.
\]
It follows that the disk symbols constructed here lie outside $W_\nu^{-m,\infty}(\mathbb D)$ for every finite $m$, giving a negative answer to this specific necessity question on $A^2(\mathbb D)$. Rozenblum and Vasilevski developed a broader sesquilinear form framework for Bergman space Toeplitz operators, encompassing measure and distributional symbols, nonlocal forms, and certain forms of infinite differential order \cite{RozenblumVasilevski2016}. Their general representation theorems concern this enlarged class of symbols and do not by themselves settle the preceding necessity question for standard Toeplitz forms induced by scalar-valued functions in $L^1(\mathbb D,dv_0)$. Since the distributional Berezin transform agrees with $B_0(g)$ for the globally integrable symbols above, the proposition also supplies on $A^2(\mathbb D)$ the compactness implication behind \cite[Problem~8]{PeralaTaskinenVirtanen2011Problems}, although it does not produce the coefficient representation requested there. 

Per\"al\"a, Taskinen, and Virtanen also ask for a necessary and sufficient condition for compactness on $A^p(\mathbb D)$, $1<p<\infty$ (at least for $p=2$), with $L^1_{\mathrm{loc}}$ or distributional symbols \cite[Problem~3]{PeralaTaskinenVirtanen2011Problems}. Theorem~\ref{thm:main} shows that boundary vanishing alone is not sufficient even in the $p=2$ subcase with a globally integrable symbol. 
Taskinen and Virtanen characterize compactness on $A^p(\mathbb D)$, $1<p<\infty$, for $L^1$ symbols satisfying their weak averaging condition~(1.3). Under that condition, compactness is equivalent to \[ \norm{P(f\circ\varphi_\lambda)}_{L^q(\mathbb D,dv_0)} \longrightarrow0 \qquad\text{as }|\lambda|\to1 \] for some, equivalently every, $1\leq q<\infty$, where $P$ is the Bergman projection \cite[Theorem~3.1]{TaskinenVirtanen2018Compactness}. For the disk instance of Theorem~\ref{thm:main}, covariance and the sequence $(a_m)$ used in its proof give \[ \norm{P(f\circ\varphi_{a_m})}_{A^2(\mathbb D)} = \norm{T_fk_{a_m}}_{A^2(\mathbb D)} \not\longrightarrow0. \] Thus their norm vanishing condition fails, but the present construction does not determine whether their weak averaging condition itself holds.

For $n\geq2$, which is the standing assumption in \cite{HeCao2013}, Theorem~\ref{thm:main} also shows that the compactness criterion in \cite[Theorem~2.8]{HeCao2013} does not hold in its stated generality: the symbol furnished by the theorem satisfies the stated hypotheses, but $T_f$ is not compact. We discuss this point further in Section~\ref{sec:concluding}.

For radial symbols, Toeplitz operators are diagonal and their eigenvalues are weighted moments of the symbol \cite{GrudskyVasilevski2001,GrudskyKarapetyantsVasilevski2003}.  For bounded radial symbols on the disk, Korenblum and Zhu proved that boundary vanishing of the Berezin transform is equivalent to compactness \cite{KorenblumZhu1995}. A different limitation occurs even for bounded real-valued radial symbols: a strictly positive limit inferior of the Berezin transform does not imply positivity of the operator even modulo compact perturbations \cite{Looi2026Radial}. Our constructions use nonradially arranged blocks and do not address the radial $L^1$ case.

\subsection*{Outline of the proofs}

The proof of Theorem~\ref{thm:main} uses the following off-diagonal construction.  Let $e_K(z)=\beta_K z_1^K$ be a normalized monomial and define the rank-one operator
\[
R_Kg=\inner{g}{1}e_K.
\]
Then $\norm{R_K}=1$, whereas
\[
\norm{B_\gamma(R_K)}_{L^\infty(\B_n)}\longrightarrow0
\qquad\text{as }K\longrightarrow\infty.
\]
We convert these model operators into an $L^1$ symbol as follows.  We first approximate each $R_K$ in operator norm by a Toeplitz operator with a smooth compactly supported symbol.  This amounts to replacing a derivative of a point mass by a smooth approximation.  For finite-rank results with measure or distributional symbols, see \cite{Luecking2008,AlexandrovRozenblum2009}; by \cite[Theorem~4.1]{AlexandrovRozenblum2009}, a compactly supported distributional symbol giving a finite-rank Toeplitz operator is necessarily a finite sum of derivatives of point masses, which is the form of the symbol below. For boundedness and compactness of Toeplitz operators with distributional symbols, see \cite{PeralaTaskinenVirtanen2011}. 

We then transport the approximating symbols toward the boundary by automorphisms of the ball.  Covariance preserves the operator norms, while the $L^1$ masses can be made summable.  We choose the transport points so that the initial and final vectors of the transported rank-one blocks form Bessel sequences.  This separation makes the sum bounded and preserves its action on the weakly null sequence $(k_{a_m})$. The summable uniform bounds on the Berezin transforms of the blocks give boundary vanishing.

The proof of Theorem~\ref{thm:zorboska-endpoint} requires a different construction. Indeed, for the off-diagonal symbol constructed in the proof of Theorem~\ref{thm:main}, the matrix coefficient used to prove noncompactness also gives
\[
\norm{T_{f\circ\varphi_{a_m}}1}_{L^p(dv_0)}
\gtrsim_p K_m^{1/2-1/p}
\]
for every $p>2$, where $K_m\to\infty$ are the block degrees.  Thus that construction cannot satisfy Zorboska's additional hypothesis. Instead, we use the diagonal rank-one projections
\[
P_K=e_K\otimes e_K,
\qquad
e_K(z)=\sqrt{K+1}\,z^K.
\]
Their Berezin transforms have uniform norm of order $K^{-1}$. For $a\in\mathbb D$, with $r=|a|$, define the localized $L^3$ profile
\[
L_K(r)
= \norm{U_aP_KU_a1}_{L^3(dv_0)}.
\]
This depends only on $r$. The profiles are bounded uniformly in $K$ and, for each fixed $K$, tend to zero as $r\to0$ or $r\to1$. We choose the degrees $\ell_m$ so that
\[
\sum_m\frac{1}{\ell_m+1}<\infty
\]
and set $\eta_m=2^{-m}$. The sets
\[
C_m
= \left\{
c\in\mathbb D:
L_{\ell_m}(|c|)\geq\eta_m
\right\}
\]
are compact subsets of $\mathbb D$. We choose the transport points $b_m$ so that the transported monomials $U_{b_m}e_{\ell_m}$ form a Bessel sequence and the sets $\varphi_{b_m}(C_m)$ are pairwise disjoint. The summability of $(\ell_m+1)^{-1}$ gives boundary vanishing. The Bessel condition makes the positive sum bounded, while its action on the weakly null transported monomials makes it noncompact. The disjointness ensures that at each localization point at most one block reaches its threshold; the remaining contributions are controlled by $\sum_m\eta_m$, giving the uniform $L^3$ estimate. Finally, for each $K$ we construct a real radial symbol $h_K\in C_c^\infty(\mathbb D)$ such that
\[
T_{h_K}=P_K+E_K,
\qquad E_K\geq0,
\]
with $\norm{E_K}_{\mathcal S_1}$ arbitrarily small.  Transporting these symbols toward the boundary makes their $L^1$ masses summable and produces the symbol in Theorem~\ref{thm:zorboska-endpoint}.

\section{Preliminaries}
For $a\in\B_n$, let $\varphi_a$ be the standard involutive automorphism of $\B_n$ that interchanges $0$ and $a$, so that $\varphi_a(0)=a$ and $\varphi_a(a)=0$. The operator
\[
(U_ag)(z)
=g(\varphi_a(z))
\frac{(1-|a|^2)^{N/2}}{(1-\inner{z}{a})^N}
\]
satisfies
\[
U_a1=k_a.
\]
We will use the following transformation rule, 
\[
\int_{\B_n}F(\varphi_a(z))\dvgamma(z)
=\int_{\B_n}F(w)|k_a(w)|^2\dvgamma(w),
\]
valid for every $F$ that is nonnegative and measurable (with both sides possibly infinite), and hence for every integrable $F$; see \cite[Proposition~1.13, p.~16]{Zhu2005}. For the formula for the reproducing kernel itself, see \cite[Theorem~2.7, p.~44]{Zhu2005}. The standard identity $1-\inner{\varphi_a(z)}{a}=(1-\abs{a}^2)/(1-\inner{z}{a})$ (see \cite{Zhu2005}) yields
\begin{equation}\label{eq:kernel-identity}
k_a(\varphi_a(z))\,k_a(z)=1,
\qquad z\in\B_n.
\end{equation}
Thus $U_a^2=I$, since $\varphi_a\circ\varphi_a =\operatorname{id}$. Moreover, the transformation rule applied to $F=\abs{g}^2\abs{k_a\circ\varphi_a}^2$, together with \eqref{eq:kernel-identity}, gives $\norm{U_ag}=\norm{g}$ for every $g\in A^2_\gamma$. Hence $U_a$ is a self-adjoint unitary on $A^2_\gamma$, since $U_a^*=U_a^*U_a^2=U_a$.

\begin{lemma}\label{lem:l1-covariance}
Suppose that $h\in L^1(\B_n,dv_\gamma)$ and that its Toeplitz form has a bounded extension $T_h$. Then
\[
\inner{T_hg_1}{g_2}
=\int_{\B_n}h(z)g_1(z)\overline{g_2(z)}\dvgamma(z)
\]
for all bounded holomorphic functions $g_1$ and $g_2$. Moreover, for every $a\in\B_n$, the Toeplitz form with symbol
$h\circ\varphi_a$ has the bounded extension
\[
T_{h\circ\varphi_a}=U_aT_hU_a^*.
\]
\end{lemma}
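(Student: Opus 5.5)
For the first assertion I would pass from polynomials to bounded holomorphic functions by dilation. Given bounded holomorphic $g_1,g_2$, set $g_{j,r}(z)=g_j(rz)$ for $0<r<1$. Each $g_{j,r}$ is holomorphic on a neighborhood of $\overline{\B_n}$. Its Taylor polynomials therefore converge to $g_{j,r}$ uniformly on $\overline{\B_n}$, and hence also in $A^2_\gamma$. For fixed $r$, the polynomial identity passes to the limit on both sides:
\begin{itemize}
\item on the left, because $T_h$ is bounded;
\item on the right, by dominated convergence, since $h\in L^1(dv_\gamma)$ and the products of the Taylor polynomials converge uniformly with a uniform bound.
\end{itemize}
This gives $\inner{T_hg_{1,r}}{g_{2,r}}=\int h\,g_{1,r}\overline{g_{2,r}}\dvgamma$. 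Now let $r\to1$. The functions $g_{j,r}\to g_j$ in $A^2_\gamma$ (standard for bounded, indeed all, $A^2_\gamma$ functions), so the left side converges. On the right, $g_{j,r}\to g_j$ pointwise with $|g_{j,r}|\le\norm{g_j}_\infty$, so dominated convergence with dominating function $\norm{g_1}_\infty\norm{g_2}_\infty|h|$ applies. This proves the first claim.

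For covariance I would define the candidate operator $S=U_aT_hU_a^*=U_aT_hU_a$, which is bounded, and verify the form identity for polynomials $p,q$. First, $U_ap$ is bounded holomorphic: $p\circ\varphi_a$ is bounded, and $(1-\inner{z}{a})^{-N}$ is bounded on $\B_n$ for fixed $a$. The same holds for $U_aq$. By the first part,
\[
\inner{Sp}{q}=\inner{T_hU_ap}{U_aq}=\int_{\B_n} h(w)\,p(\varphi_a(w))\overline{q(\varphi_a(w))}\,|k_a(w)|^2\dvgamma(w),
\]
since $(U_ap)\overline{(U_aq)}=(p\circ\varphi_a)\overline{(q\circ\varphi_a)}\,|k_a|^2$. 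Next I would apply the transformation rule with $F=h\cdot(p\circ\varphi_a)\overline{(q\circ\varphi_a)}$, read right to left, together with $\varphi_a\circ\varphi_a=\mathrm{id}$. This turns the integral into $\int (h\circ\varphi_a)\,p\,\overline q\dvgamma$, which is exactly the Toeplitz form of $h\circ\varphi_a$.

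The transformation rule needs $F$ integrable against $|k_a|^2\dvgamma$. This holds because $|k_a|^2$ is bounded for fixed $a$ and $h\in L^1$. The same rule applied to $|h|$ gives $h\circ\varphi_a\in L^1(dv_\gamma)$, with $\int|h\circ\varphi_a|\dvgamma=\int|h||k_a|^2\dvgamma<\infty$. So the Toeplitz form of $h\circ\varphi_a$ is defined, and $S$ is its bounded extension.

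The main obstacle is the limiting argument in the first part: the Toeplitz form is defined only on polynomials, and the extension must be justified without assuming the form is bounded on larger classes. The dilation step handles this, because it uses only boundedness of $T_h$, integrability of $h$, and uniform bounds on the test functions.
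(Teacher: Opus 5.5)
Your proposal is correct and follows essentially the same route as the paper. The paper also extends the form identity to bounded holomorphic functions by dilation and uniform Taylor approximation, then applies the transformation rule to $\inner{T_hU_ap}{U_aq}$ with $U_ap,U_aq$ bounded holomorphic; your explicit check that $h\,(p\circ\varphi_a)\overline{(q\circ\varphi_a)}$ is integrable against $|k_a|^2\dvgamma$ is a detail the paper leaves implicit.
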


\begin{proof}
For a bounded holomorphic function $g$, let $g_r(z):=g(rz)$. For fixed $r<1$, the Taylor polynomials of $g_r$ converge uniformly on $\overline{\B_n}$. Hence the defining identity for $T_h$, first for polynomials and then by uniform approximation, gives
\[
\inner{T_h(g_1)_r}{(g_2)_r}
=\int_{\B_n}h(z)(g_1)_r(z)\overline{(g_2)_r(z)}\dvgamma(z).
\]
As $r\to1$, $(g_j)_r\to g_j$ in $A^2_\gamma$ by dominated convergence, so the left-hand side converges because $T_h$ is bounded. The right-hand side also converges by dominated convergence, since $|(g_j)_r|\le\norm{g_j}_\infty$. This proves the first assertion.

For fixed $a\in\B_n$, the transformation rule and the boundedness of $k_a$ give
\[
\norm{h\circ\varphi_a}_{L^1(dv_\gamma)}
=\int_{\B_n}|h(w)||k_a(w)|^2\dvgamma(w)<\infty.
\]
For holomorphic polynomials $p,q$, the functions $U_ap$ and $U_aq$ are bounded and holomorphic for fixed $a \in \mathbb B_n$. Hence the first assertion, the transformation rule, and the fact that $U_a$ is a self-adjoint unitary ($U_a^2=I$) give
\[
\begin{aligned}
\inner{U_aT_hU_a^*p}{q}
&=\inner{T_hU_ap}{U_aq}\\
&=\int_{\B_n}h(w)
  p(\varphi_a(w))\overline{q(\varphi_a(w))}
  |k_a(w)|^2\dvgamma(w)\\
&=\int_{\B_n}h(\varphi_a(z))p(z)\overline{q(z)}
  \dvgamma(z).
\end{aligned}
\]
Thus $U_aT_hU_a^*$ is the bounded extension of the Toeplitz form with symbol $h\circ\varphi_a$.
\end{proof}

For every bounded operator $A$ on $A^2_\gamma$, we also have
\begin{equation}\label{eq:berezin-covariance}B_\gamma(U_aAU_a^*)=B_\gamma(A)\circ\varphi_a.\end{equation}
Indeed, since $U_a$ is a self-adjoint unitary, $B_\gamma(U_aAU_a^*)(z)=\inner{AU_ak_z}{U_ak_z}$. The definition of $U_a$ shows that $U_ak_z=\lambda\,k_{\varphi_a(z)}$ for a scalar $\lambda=\lambda(a,z)$, and $\abs{\lambda}=\norm{U_ak_z}=1$, which gives \eqref{eq:berezin-covariance}. (We note that while $|\lambda|=1$ always, its phase varies with $\langle z,a\rangle$.)

\begin{lemma}\label{lem:weak-null}
For every $w\in A^2_\gamma$,
\[
U_aw\rightharpoonup 0
\qquad\text{as } |a|\to 1.
\]
In particular, $k_a\rightharpoonup 0$ as $|a|\to 1$.
\end{lemma}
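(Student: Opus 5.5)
Since the family $\{U_a\}$ is uniformly bounded (each $U_a$ is unitary), it suffices to test weak convergence against a dense subset of $A^2_\gamma$ and to verify the claim for a dense set of vectors $w$. A standard $\varepsilon/3$ argument then extends the conclusion to all $w$ and all test vectors. We take holomorphic polynomials as the dense class. Since $U_a$ is self-adjoint,
\[
\inner{U_aw}{q}=\inner{w}{U_aq},
\]
so by symmetry it is enough to show $\inner{U_ap}{q}\to0$ as $|a|\to1$ for holomorphic polynomials $p,q$.

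The plan is to reduce to kernels. First treat $p=1$, where $U_a1=k_a$ and
\[
\inner{k_a}{q}=(1-|a|^2)^{N/2}\,\overline{q(a)}.
\]
This tends to $0$ because $q$ is bounded on $\B_n$ and $N>0$; hence $k_a\rightharpoonup0$. For a general polynomial $p$ we write
\[
U_ap=(p\circ\varphi_a)\,k_a .
\]
Then the inner product becomes
\[
\inner{U_ap}{q}=\int_{\B_n}p(\varphi_a(z))\,k_a(z)\,\overline{q(z)}\dvgamma(z).
\]
The factor $p\circ\varphi_a$ is bounded by $\norm{p}_\infty$ uniformly in $a$. It therefore suffices to show $\int_{\B_n}|k_a||q|\dvgamma\to0$, that is, $k_a\to0$ in $L^1(dv_\gamma)$.

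For the $L^1$ estimate, we use the standard asymptotics
\[
\int_{\B_n}\frac{dv_\gamma(z)}{|1-\inner{z}{a}|^{N/2}}.
\]
The exponent $N/2$ is compared with the critical exponent $n+1+\gamma=N$; since $N/2<N$, this integral is $O\bigl((1-|a|^2)^{-\max(0,\,N/2-N)}\bigr)$ up to a logarithm. Because $N/2-N<0$, the integral is bounded, possibly with a logarithmic factor only in borderline cases, which do not occur here. Multiplying by $(1-|a|^2)^{N/2}$ gives $\norm{k_a}_{L^1}\lesssim (1-|a|^2)^{N/2}\to0$. Alternatively, one can avoid the integral estimates altogether: $\norm{k_a}_{L^2}=1$, and $k_a\to0$ pointwise on $\B_n$ and uniformly on compact sets. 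Choosing $r$ with the $dv_\gamma$-mass of $\{|z|>r\}$ small and applying Cauchy--Schwarz on that shell gives $k_a\to0$ in $L^1$ directly. This second route is the one I would write, since it avoids the borderline computations.

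The main technical point, and not a real obstacle, is being careful that the density reduction uses uniform boundedness of $U_a$ together with self-adjointness. It also requires $p\circ\varphi_a$ to be uniformly bounded, which is immediate because $\varphi_a$ maps $\B_n$ into $\B_n$. The final statement $k_a\rightharpoonup0$ is the special case $w=1$.
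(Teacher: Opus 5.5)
Your proof is correct and is essentially the paper's argument: both reduce to holomorphic polynomials $p,q$ by density and uniform boundedness of the unitaries $U_a$, and then combine the uniform decay of $k_a$ on $r\B_n$ with Cauchy--Schwarz on the shell $\B_n\setminus r\B_n$ (the paper splits $\inner{U_ap}{q}$ directly, using $\norm{U_ap}=\norm{p}$ and the $L^2$ tail of $q$, whereas you first bound it by $\norm{p}_\infty\norm{q}_\infty\norm{k_a}_{L^1(dv_\gamma)}$ and then split $\norm{k_a}_{L^1(dv_\gamma)}$). One correction to the route you discarded: since $\abs{k_a(z)}=(1-\abs{a}^2)^{N/2}\abs{1-\inner{z}{a}}^{-N}$, the relevant exponent in the denominator is $N=n+1+\gamma$, not $N/2$, which is precisely the borderline logarithmic case of the Forelli--Rudin estimate, so $\norm{k_a}_{L^1(dv_\gamma)}$ is of order $(1-\abs{a}^2)^{N/2}\log\frac{1}{1-\abs{a}^2}$ rather than $O\bigl((1-\abs{a}^2)^{N/2}\bigr)$ (it still tends to $0$), and your preference for the second route is well founded.
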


\begin{proof}
Since $U_a$ is unitary and the holomorphic polynomials are dense in $A^2_\gamma$, it suffices to prove that $\inner{U_ap}{q}\to 0$ as $|a|\to 1$ for all holomorphic polynomials $p$ and $q$. Indeed, for general $w,g\in A^2_\gamma$ and polynomials $p,q$,
\[
\abs{\inner{U_aw}{g}}
\le\norm{w-p}\,\norm{g}
+\norm{p}\,\norm{g-q}
+\abs{\inner{U_ap}{q}}.
\]
Fix polynomials $p,q$ and $0<r<1$. If $z\in r\B_n$, then $\abs{1-\inner{z}{a}}\ge 1-r$, so the formula for $U_a$ gives
\[
\sup_{z\in r\B_n}\abs{(U_ap)(z)}
\le\Big(\sup_{\B_n}\abs{p}\Big)(1-r)^{-N}(1-|a|^2)^{N/2}
=:\delta_r(a),
\]
and $\delta_r(a)\to 0$ as $|a|\to 1$. Splitting the integral defining $\inner{U_ap}{q}$ over $r\B_n$ and $\B_n\setminus r\B_n$, and applying
the Cauchy--Schwarz inequality on the second region,
\[
\abs{\inner{U_ap}{q}}
\le\delta_r(a)\norm{q}_{L^1(dv_\gamma)}
+\norm{p}_{A^2_\gamma}
\left(\int_{\B_n\setminus r\B_n}\abs{q}^2\dvgamma\right)^{1/2}.
\]
Letting $|a|\to 1$ and then $r\to 1$ proves the lemma.
\end{proof}

\section{Three inputs to the construction}

\subsection{Rank-one operators with small Berezin transform}

For an integer $K\ge 0$, define
\[
\beta_K
=\left(\frac{\Gamma(N+K)}{\Gamma(N)K!}\right)^{1/2},
\qquad
e_K(z)=\beta_K z_1^K.
\]
Then $\norm{e_K}_{A^2_\gamma}=1$. Let
\[
R_Kh:=\inner{h}{1}e_K.
\]
Thus $R_K$ has rank one and $\norm{R_K}=1$.

For vectors $x,y$ in a Hilbert space, $x\otimes y$ denotes the rank-one operator $(x\otimes y)\, h := \langle h,y\rangle \, x$. In this notation, $R_K = e_K \otimes 1$. 

\begin{lemma}\label{lem:rank-one-berezin}
There is a constant $C_N>0$ such that
\[
\norm{B_\gamma(R_K)}_\infty
\le C_NK^{-(N+1)/2}
\]
for every $K\ge 1$.
\end{lemma}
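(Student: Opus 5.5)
Since $R_K=e_K\otimes 1$, the Berezin transform has the explicit form
\[
B_\gamma(R_K)(z)=\inner{R_Kk_z}{k_z}=\inner{k_z}{1}\,\inner{e_K}{k_z}.
\]
The plan is to evaluate both factors by the reproducing property and then maximize the resulting explicit expression in $|z|$.

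First, $\inner{k_z}{1}=\overline{\inner{1}{k_z}}=(1-|z|^2)^{N/2}$, because $\inner{g}{K_z}=g(z)$ and $k_z=(1-|z|^2)^{N/2}K_z$. Similarly, $\inner{e_K}{k_z}=(1-|z|^2)^{N/2}\beta_Kz_1^K$. Hence
\[
\abs{B_\gamma(R_K)(z)}=\beta_K(1-|z|^2)^N|z_1|^K\le\beta_K\,(1-t)^{N}t^{K/2},\qquad t=|z|^2,
\]
using $|z_1|\le|z|$.

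Next, maximize $\phi(t)=(1-t)^Nt^{K/2}$ over $[0,1]$. The maximum is attained at $t=K/(K+2N)$, and there
\[
\phi_{\max}=\Big(\frac{2N}{K+2N}\Big)^N\Big(\frac{K}{K+2N}\Big)^{K/2}\le (2N)^N K^{-N}.
\]
Here the second factor is at most $1$.

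Then estimate $\beta_K$. By the standard Gamma ratio asymptotic $\Gamma(K+N)/\Gamma(K+1)\asymp K^{N-1}$, we get $\beta_K\le C K^{(N-1)/2}$ for $K\ge1$, with $C$ depending only on $N$; note $N>1$ here. Combining,
\[
\norm{B_\gamma(R_K)}_\infty\le C_N K^{(N-1)/2-N}=C_NK^{-(N+1)/2}.
\]

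None of these steps is a real obstacle. The only points requiring care are the uniformity of the Gamma ratio bound for all $K\ge1$, which follows from Wendel's or Gautschi's inequality or simply from convergence of the ratio $\Gamma(K+N)/(\Gamma(K+1)K^{N-1})\to1$ combined with positivity, and the conjugation conventions in the inner products, which only affect phases and not absolute values.
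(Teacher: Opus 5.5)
Your proposal is correct and matches the paper's proof essentially step for step. Both arguments factor $B_\gamma(R_K)(z)=\langle k_z,1\rangle\langle e_K,k_z\rangle$ via the reproducing property, maximize $(1-|z|^2)^N|z|^K$ at $|z|^2=K/(K+2N)$ to get $O(K^{-N})$, and bound $\beta_K\le C_N K^{(N-1)/2}$ by the Gamma-ratio (Stirling) asymptotics.
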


\begin{proof}
For $z\in\B_n$,
\[
B_\gamma(R_K)(z)
=\inner{k_z}{1}\inner{e_K}{k_z}.
\]
Thus, writing $r=|z|$,
\[
\abs{B_\gamma(R_K)(z)}
=\beta_K(1-|z|^2)^N|z_1|^K
\le \beta_K(1-r^2)^Nr^K.
\]
The maximum of $(1-r^2)^Nr^K$ occurs at
$r^2=K/(K+2N)$. Therefore
\[
\sup_{0\le r<1}(1-r^2)^Nr^K
=\left(\frac{2N}{K+2N}\right)^N
\left(\frac{K}{K+2N}\right)^{K/2}
\le C_NK^{-N}.
\]
Stirling's formula gives $\beta_K\le C_NK^{(N-1)/2}$, and the result follows.
\end{proof}

\subsection{Approximation by smooth Toeplitz symbols}

Finite-rank Bergman--Toeplitz operators with compactly supported measure or distributional symbols are treated in \cite{Luecking2008,AlexandrovRozenblum2009}.

\begin{lemma}\label{lem:rank-one-realization}
For every $K\ge 0$ and every $\eta>0$, there is a function $h_{K,\eta}\in C_c^\infty(\B_n)$ such that
\[
\norm{T_{h_{K,\eta}}-R_K}_{A^2_\gamma\to A^2_\gamma}<\eta.
\]
\end{lemma}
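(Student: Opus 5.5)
The plan is to realize $R_K=e_K\otimes 1$ exactly as a Toeplitz operator with a distributional symbol supported at the origin, namely a derivative of a point mass, and then to mollify that distribution. For holomorphic $p,q$ we have
\[
\inner{R_Kp}{q}=p(0)\,\inner{e_K}{q}=\beta_K\,p(0)\,\overline{\inner{q}{z_1^K}}.
\]
Since $\inner{q}{z_1^K}=\norm{z_1^K}^2\,\partial_1^Kq(0)/K!$ by orthogonality of monomials, this becomes
\[
\inner{R_Kp}{q}=c_K\,p(0)\,\overline{\partial_1^K q(0)},
\qquad c_K=\beta_K\norm{z_1^K}^2/K!=1/(\beta_K K!).
\]
Note that $\overline{\partial_1^Kq}=\bar\partial_1^K\bar q$, and that $\bar\partial_1^K(p\bar q)=p\,\bar\partial_1^K\bar q$ because $p$ is holomorphic. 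So the form equals $c_K\,\bar\partial_1^K(p\bar q)(0)$, which is pairing $p\bar q$ against the distribution $(-1)^Kc_K\bar\partial_1^K\delta_0$.

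To mollify, take a smooth radial bump $\chi_\epsilon\ge0$ supported in $\epsilon\B_n$, normalized so that $\int\chi_\epsilon\,dv_\gamma=1$. Define
\[
h_\epsilon=(-1)^Kc_K\,\bar\partial_1^K\Big(\frac{\chi_\epsilon\, c_\gamma(1-|z|^2)^\gamma}{c_\gamma(1-|z|^2)^\gamma}\cdot\Big),
\]
or more cleanly,
\[
h_\epsilon\,c_\gamma(1-|z|^2)^\gamma=(-1)^Kc_K\,\bar\partial_1^K\big(\chi_\epsilon c_\gamma(1-|z|^2)^\gamma\big).
\]
This $h_\epsilon$ is in $C_c^\infty(\B_n)$, since the weight is smooth and nonvanishing on $\epsilon\B_n$. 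Integrating by parts $K$ times with $F=g_1\bar g_2$ gives
\[
\inner{T_{h_\epsilon}g_1}{g_2}=c_K\int \bar\partial_1^K(g_1\bar g_2)\,\chi_\epsilon\,dv_\gamma=c_K\int g_1\,\overline{\partial_1^Kg_2}\,\chi_\epsilon\,dv_\gamma .
\]

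The main step is to estimate the difference. Write $\mu_\epsilon=\chi_\epsilon\,dv_\gamma$, a probability measure on $\epsilon\B_n$. Then
\[
\inner{(T_{h_\epsilon}-R_K)g_1}{g_2}=c_K\int\Big(g_1\overline{\partial_1^Kg_2}-g_1(0)\overline{\partial_1^Kg_2(0)}\Big)\,d\mu_\epsilon .
\]
For $|z|\le\epsilon\le 1/2$, the Cauchy estimates and the subharmonic mean-value bound on $\tfrac34\B_n$ give
\[
|g(z)-g(0)|\le C\epsilon\norm{g},\qquad |\partial_1^Kg(z)-\partial_1^Kg(0)|\le C_K\epsilon\norm{g},
\]
and both $|g(z)|$ and $|\partial_1^Kg(z)|$ are bounded by $C_K\norm{g}$, with $\norm{\cdot}$ the $A^2_\gamma$ norm. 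Splitting the integrand as $(g_1-g_1(0))\overline{\partial_1^Kg_2}+g_1(0)\big(\overline{\partial_1^Kg_2-\partial_1^Kg_2(0)}\big)$, the form is bounded by $C_K'\epsilon\norm{g_1}\norm{g_2}$. Choosing $\epsilon<\eta/C_K'$ finishes the proof.

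One check is needed. The defining identity of $T_{h_\epsilon}$ holds on polynomials, and the computation above is valid there. The form is bounded by $(\norm{R_K}+C_K'\epsilon)\norm{p}\norm{q}$, so it extends boundedly, and the extension differs from $R_K$ in norm by at most $C_K'\epsilon<\eta$. The only delicate points are the integration by parts, where the boundary terms vanish because $\chi_\epsilon$ has compact support, and the local Cauchy estimates, both of which are routine.
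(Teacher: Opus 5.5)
Your proposal is correct and follows essentially the same route as the paper: you identify the form of $R_K$ with $\frac{1}{K!\beta_K}\,\partial^K(p\overline{q})/\partial\overline{z}_1^K$ evaluated at $0$ (a derivative of a point mass), mollify it, integrate by parts $K$ times, and bound the difference between the average and the point value by $C\epsilon$ using Cauchy estimates. The remaining differences are cosmetic. You normalize the bump against $dv_\gamma$ and differentiate $\chi_\epsilon$ times the weight, where the paper divides $D_K\rho_\epsilon$ by the weight. You also split the product into two difference terms instead of using a Lipschitz bound on the product.
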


\begin{proof}
Let $D_K:=\partial^K/\partial\overline z_1^{\,K}$. Throughout the proof, $C$ denotes a positive constant depending only on $K$, $n$, and $N$, whose value may change from one occurrence to the next. The proof runs as follows. We first show that the Toeplitz form of $R_K$ evaluates the function $\frac{1}{K!\beta_K}D_K(p\overline q)$ at the origin, so that the symbol of $R_K$ is a constant multiple of $D_K\delta_0$. We then replace the point mass by a smooth bump. The resulting error is an average of a function minus its value at the center of the average, and we estimate it by a Lipschitz bound that holds uniformly over the unit balls of $A^2_\gamma$.

\medskip
\noindent
\emph{Step 1: the Toeplitz form of $R_K$ is a point evaluation.} We claim that for all holomorphic polynomials $p$ and $q$,
\begin{equation}\label{eq:point-eval}
\inner{R_Kp}{q}
=\frac{1}{K!\beta_K}\,D_K(p\overline q)(0).
\end{equation}
Distinct monomials are orthogonal in $A^2_\gamma$, since $dv_\gamma$ is invariant under rotations in each coordinate. Hence only the constant term of $p$ contributes to $\inner{p}{1}$, and $\inner{p}{1}=p(0)$ because $dv_\gamma$ is a probability measure. We write $c_K$ for the coefficient of $z_1^K$ in $q$. Only the $z_1^K$ term of $q$ pairs nontrivially with $e_K$, so
\[
\inner{e_K}{q}
=\beta_K\,\overline{c_K}\,\norm{z_1^K}_{A^2_\gamma}^2
=\frac{\overline{c_K}}{\beta_K}
=\frac{1}{K!\beta_K}\,
\overline{\frac{\partial^Kq}{\partial z_1^K}(0)},
\]
where we used $\norm{z_1^K}_{A^2_\gamma}=\beta_K^{-1}$, which is the normalization of $e_K$, together with $c_K=\frac{1}{K!}\frac{\partial^Kq}{\partial z_1^K}(0)$. Therefore
\[
\inner{R_Kp}{q}
=\inner{p}{1}\inner{e_K}{q}
=\frac{1}{K!\beta_K}\,
p(0)\,\overline{\frac{\partial^Kq}{\partial z_1^K}(0)}.
\]
On the other hand, $\partial/\partial\overline z_1$ annihilates the holomorphic factor $p$, so
\begin{equation}\label{eq:DK-product}
D_K(p\overline q)
=p\,\overline{\frac{\partial^Kq}{\partial z_1^K}},
\end{equation}
and evaluating \eqref{eq:DK-product} at the origin proves \eqref{eq:point-eval}.

\medskip
\noindent
\emph{Step 2: mollification.}
We fix $\rho\in C_c^\infty(\C^n)$ with $\rho \ge 0$, $\operatorname{supp}\rho\subset\frac12\B_n$ and $\int_{\C^n}\rho\dv=1$, and for $0<\epsilon\le1$ we put
\[
\rho_\epsilon(z)=\epsilon^{-2n}\rho(z/\epsilon),
\qquad
h_\epsilon(z)
=\frac{(-1)^K}{K!\beta_Kc_\gamma}\,
(1-\abs{z}^2)^{-\gamma}\,D_K\rho_\epsilon(z).
\]
Then $\operatorname{supp}h_\epsilon =\operatorname{supp} D_K(\rho_\epsilon) \subset \operatorname{supp}\rho_\epsilon\subset\frac{\epsilon}{2}\B_n$, and $h_\epsilon\in C_c^\infty(\B_n)$ because the weight $(1-\abs z^2)^{-\gamma}$ is smooth on $\B_n$. Since $h_\epsilon$ is bounded with compact support, the Toeplitz operator $T_{h_\epsilon}$ is bounded on $A^2_\gamma$ and satisfies $\inner{T_{h_\epsilon}p}{q} =\int_{\B_n}h_\epsilon\,p\overline q\dvgamma$ for all holomorphic polynomials $p,q$.

We next verify that, for every $\psi\in C^\infty(\B_n)$,
\begin{equation}\label{eq:mollified-form}
\int_{\B_n}h_\epsilon\,\psi\dvgamma
=\frac{1}{K!\beta_K}
\int_{\B_n}\rho_\epsilon\,D_K\psi\dv.
\end{equation}
Indeed, $\dvgamma=c_\gamma(1-\abs z^2)^\gamma\dv$, so the weight factors cancel and the left side equals
\[
\frac{(-1)^K}{K!\beta_K}
\int_{\B_n}(D_K\rho_\epsilon)\,\psi\dv.
\]
Since $\partial/\partial\overline z_1 =\frac12(\partial/\partial x_1+i\,\partial/\partial y_1)$ and $\rho_\epsilon$ has compact support, we may integrate by parts $K$ times in $\overline z_1$ without boundary terms.  Each integration contributes one factor of $-1$, and \eqref{eq:mollified-form} follows.

We now take $\psi=p\overline q$ in \eqref{eq:mollified-form}, subtract \eqref{eq:point-eval}, and abbreviate
\[
F=F_{p,q}
:=\frac{1}{K!\beta_K}D_K(p\overline q)
=\frac{1}{K!\beta_K}\,
p\,\overline{\frac{\partial^Kq}{\partial z_1^K}}.
\]
This gives
\begin{equation}\label{eq:avg-minus-point}
\inner{(T_{h_\epsilon}-R_K)p}{q}
=\int_{\C^n}\rho_\epsilon F\dv-F(0)
=\int_{\C^n}\rho_\epsilon(z)
\bigl(F(z)-F(0)\bigr)\dv(z),
\end{equation}
where the last equality uses $\int_{\C^n}\rho_\epsilon\dv=1$.

\medskip
\noindent
\emph{Step 3: a Lipschitz bound for $F$, uniform in $p$ and $q$.}
From now on we take $0<\epsilon\le\frac12$, so that $\operatorname{supp}\rho_\epsilon\subset\frac{\epsilon}{2}\B_n \subset\frac14\B_n$. Suppose $\norm{p}_{A^2_\gamma}\le1$ and $\norm{q}_{A^2_\gamma}\le1$. Every $g\in A^2_\gamma$ satisfies
\[
\abs{g(w)}
=\abs{\inner{g}{K_w}}
\le\norm{g}_{A^2_\gamma}(1-\abs w^2)^{-N/2},
\]
so $\abs p\le C$ and $\abs q\le C$ on $\frac12\B_n$. We use the Cauchy estimates on polydiscs: if $g$ is holomorphic on a neighbourhood of the closed polydisc $\overline D(z,r)=\{w:\abs{w_j-z_j}\le r,\ 1\le j\le n\}$, then, for every multiindex $\alpha$,
\begin{equation}\label{eq:cauchy-est}
\abs{\partial^\alpha g(z)}
\le\frac{\alpha!}{r^{\abs\alpha}}
\sup_{\overline D(z,r)}\abs g,
\end{equation}
as follows by iterating the one-variable Cauchy integral formula in each coordinate. If $z\in\frac14\B_n$ and $r=\frac{1}{4\sqrt n}$, then every $w\in\overline D(z,r)$ satisfies $\abs{w-z}\le\sqrt n\,r=\frac14$, so $\overline D(z,r)\subset\frac12\B_n$. Hence \eqref{eq:cauchy-est} bounds every partial derivative of $p$ of order at most $1$, and of $q$ of order at most $K+1$, by $C$ on $\frac14\B_n$. By \eqref{eq:DK-product}, $F=\frac{1}{K!\beta_K}\,p\,\overline{\partial^Kq/\partial z_1^K}$; since $p$ and $\partial^Kq/\partial z_1^K$ are holomorphic, their real first partial derivatives coincide in modulus with their complex derivatives $\partial/\partial z_j$, so the first-order real partial derivatives of $F$ are bounded by $C$ on $\frac14\B_n$. Since $\frac14\B_n$ is convex, writing $F(z)-F(0)=\int_0^1\frac{d}{dt}F(tz)\,dt$ gives
\begin{equation}\label{eq:lipschitz}
\abs{F(z)-F(0)}\le C\abs{z},
\qquad z\in\tfrac14\B_n.
\end{equation}

\medskip
\noindent
\emph{Step 4: conclusion.}
The support of $\rho_\epsilon$ lies in $\frac14\B_n$, so \eqref{eq:lipschitz} applies wherever the integrand in \eqref{eq:avg-minus-point} is nonzero. Combining \eqref{eq:avg-minus-point} and \eqref{eq:lipschitz} with the substitution $z=\epsilon w$, we obtain, uniformly over $\norm{p}_{A^2_\gamma}\le1$ and $\norm{q}_{A^2_\gamma}\le1$,
\[
\abs{\inner{(T_{h_\epsilon}-R_K)p}{q}}
\le C\int_{\C^n}\abs{\rho_\epsilon(z)}\,\abs{z}\dv(z)
=C\epsilon\int_{\C^n}\abs{\rho(w)}\,\abs{w}\dv(w)
\le C\epsilon.
\]
Since $T_{h_\epsilon}-R_K$ is bounded and the holomorphic polynomials are dense in $A^2_\gamma$,
\[
\norm{T_{h_\epsilon}-R_K}
=\sup_{\substack{p,q\ \mathrm{polynomials}\\
\norm{p}\le 1,\ \norm{q}\le 1}}
\abs{\inner{(T_{h_\epsilon}-R_K)p}{q}}
\le C\epsilon.
\]
Taking $h_{K,\eta}=h_\epsilon$ with $\epsilon<\min\{\tfrac12,\,\eta/C\}$ proves the lemma.
\end{proof}

\subsection{Moving and separating the blocks}

The next lemma will be used twice: to make the $L^1$ masses of the moved symbols summable, and to show that the Berezin transform of each block vanishes at the boundary.

\begin{lemma}[Decay lemma]\label{lem:decay}
Let $h\in L^1(\B_n,dv_\gamma)$ with $\operatorname{supp}h\subset s\B_n$ for some $s<1$. Then
\[
B_\gamma(\abs{h})(z)
\le\frac{(1-\abs{z}^2)^N}{(1-s)^{2N}}\,
\norm{h}_{L^1(dv_\gamma)},
\qquad z\in\B_n.
\]
In particular, $\norm{h\circ\varphi_a}_{L^1(dv_\gamma)} =B_\gamma(\abs{h})(a)\to 0$ as $\abs{a}\to 1$, and $B_\gamma(h)(z)\to 0$ as $\abs{z}\to 1$.
\end{lemma}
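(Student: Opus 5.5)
The plan is to bound the kernel pointwise on the support of $h$ and then integrate. By definition,
\[
B_\gamma(\abs h)(z)=\int_{\B_n}\abs{h(w)}\,\abs{k_z(w)}^2\dvgamma(w)
=(1-\abs z^2)^N\int_{s\B_n}\frac{\abs{h(w)}}{\abs{1-\inner{w}{z}}^{2N}}\dvgamma(w).
\]
For $w\in s\B_n$ and $z\in\B_n$, the Cauchy--Schwarz inequality gives $\abs{\inner wz}\le\abs w\abs z<s$. Hence $\abs{1-\inner wz}\ge1-s$, so $\abs{1-\inner wz}^{-2N}\le(1-s)^{-2N}$. Pulling this constant out of the integral yields exactly the stated bound with $\norm{h}_{L^1(dv_\gamma)}$. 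No delicate step is involved; the only point to note is that the supremum of the kernel over the support is uniform in $z$, which is what makes the factor $(1-\abs z^2)^N$ survive.

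For the consequences, I will apply the transformation rule from the Preliminaries with $F=\abs h$, which is nonnegative and measurable:
\[
\int_{\B_n}\abs{h(\varphi_a(z))}\dvgamma(z)=\int_{\B_n}\abs{h(w)}\abs{k_a(w)}^2\dvgamma(w)=B_\gamma(\abs h)(a).
\]
So $\norm{h\circ\varphi_a}_{L^1(dv_\gamma)}=B_\gamma(\abs h)(a)$. The main inequality then bounds this by $(1-\abs a^2)^N(1-s)^{-2N}\norm h_{L^1}$, which tends to $0$ as $\abs a\to1$ because $N>0$.

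Finally, $\abs{B_\gamma(h)(z)}\le B_\gamma(\abs h)(z)$ by the triangle inequality inside the integral, so $B_\gamma(h)(z)\to0$ as $\abs z\to1$ as well. The argument is routine; the only thing to watch is that $B_\gamma(h)$ is well defined for $h\in L^1$, which holds because $\abs{k_z}^2$ is bounded on the support (indeed on all of $\B_n$) for fixed $z$.
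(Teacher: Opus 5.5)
Your proposal is correct and follows essentially the same route as the paper. Both arguments use the pointwise bound $\abs{1-\inner{w}{z}}\ge 1-s$ on the support of $h$, integrate it against $\abs{h}$, apply the transformation rule with $F=\abs{h}$, and finish with $\abs{B_\gamma(h)}\le B_\gamma(\abs{h})$.
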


\begin{proof}
If $w\in\operatorname{supp}h$ and $z\in\B_n$, then $\abs{1-\inner{w}{z}}\ge 1-\abs{w}\abs{z}\ge 1-s$, so
\[
\abs{k_z(w)}^2
=\frac{(1-\abs{z}^2)^N}{\abs{1-\inner{w}{z}}^{2N}}
\le\frac{(1-\abs{z}^2)^N}{(1-s)^{2N}}.
\]
Multiplying by $\abs{h(w)}$ and integrating gives the stated inequality. The identity $\norm{h\circ\varphi_a}_{L^1(dv_\gamma)}=B_\gamma(\abs{h})(a)$ is the transformation rule applied to $F=\abs{h}$, and both limits follow from the inequality, since $\abs{B_\gamma(h)}\le B_\gamma(\abs{h})$.
\end{proof}

Recall that a sequence $(x_m)$ in a Hilbert space is a Bessel sequence if
\[
\sum_m\abs{\inner{h}{x_m}}^2\le C\norm{h}^2
\]
for some $C$ and every $h$.

\begin{lemma}[Separation lemma]\label{lem:separation}
Let $(s_m)$ and $(t_m)$ be sequences of unit vectors in $A^2_\gamma$, and let $r_m\to 1$ with $0<r_m<1$. There are points $a_m\in\B_n$ with $|a_m|>r_m$ such that
\[
u_m=U_{a_m}s_m,
\qquad
v_m=U_{a_m}t_m
\]
are Bessel sequences and
\[
\rho_m^u:=\sum_{j\ne m}\abs{\inner{u_m}{u_j}}^2\to 0,
\qquad
\rho_m^v:=\sum_{j\ne m}\abs{\inner{v_m}{v_j}}^2\to 0.
\]
Moreover, both sequences have Bessel bound $\tfrac53$, and the synthesis maps \[ (c_m)\mapsto\sum_m c_mu_m, \qquad (c_m)\mapsto\sum_m c_mv_m \] are bounded from $\ell^2$ into $A^2_\gamma$ with norm at most $\bigl(\tfrac53\bigr)^{1/2}$.
\end{lemma}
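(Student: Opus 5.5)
The points $a_m$ will be chosen inductively, using Lemma~\ref{lem:weak-null} to make all cross inner products small, and the Bessel bound will then come from a Schur test on the Gram matrix. Fix summable targets, for instance $\delta_{m,j}=\tfrac{1}{3}\,2^{-m-j}$ for $j<m$. Suppose $a_1,\dots,a_{m-1}$ have already been chosen. Since $U_a$ is a self-adjoint unitary, we have
\[
\inner{U_{a}s_m}{u_j}=\inner{s_m}{U_{a}u_j}.
\]
By Lemma~\ref{lem:weak-null}, $U_au_j\rightharpoonup0$ and $U_av_j\rightharpoonup0$ as $|a|\to1$, for each of the finitely many $j<m$. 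Hence we can choose $a_m$ with $|a_m|>r_m$ such that
\[
\abs{\inner{u_m}{u_j}}\le\delta_{m,j}
\quad\text{and}\quad
\abs{\inner{v_m}{v_j}}\le\delta_{m,j}
\qquad\text{for all } j<m.
\]
This only requires finitely many weak-null conditions at each stage. By conjugate symmetry, every off-diagonal Gram entry $\abs{\inner{u_m}{u_j}}$ with $j\neq m$ is then bounded by $\delta_{\max(m,j),\min(m,j)}$. The same bound holds for the $v$'s.

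Next I estimate the Gram matrix $G=(\inner{u_j}{u_m})$, which has ones on the diagonal. By Schur's test, $\norm{G-I}_{\ell^2\to\ell^2}\le\sup_m\sum_{j\ne m}\abs{G_{mj}}$. For fixed $m$, this row sum is at most
\[
\sum_{j\ne m}\tfrac13\,2^{-m-j}\le\tfrac13,
\]
so $\norm{G}\le\tfrac43\le\tfrac53$. The synthesis map $S:(c_m)\mapsto\sum c_mu_m$ satisfies
\[
\norm{Sc}^2=\inner{Gc}{c}\le\tfrac53\norm{c}^2
\]
on finitely supported sequences. It therefore extends to a bounded map of norm at most $(5/3)^{1/2}$, and the series converges unconditionally. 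The analysis map $S^*h=(\inner{h}{u_m})_m$ has the same norm, which is exactly the Bessel inequality with bound $\tfrac53$. The identical argument applies to $(v_m)$.

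The tail condition follows directly from the same bounds. We have
\[
\rho_m^u\le\sum_{j\ne m}\tfrac19\,4^{-m-j}\le\tfrac19\,4^{-m}\cdot\tfrac43\to0,
\]
and likewise $\rho_m^v\to0$.

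The only delicate point is the symmetry of the requirement. When we choose $a_m$, we control the inner products with earlier vectors, but we must also bound $\abs{\inner{u_m}{u_j}}$ for $j>m$. That case is handled at the later stage $j$, because the targets $\delta_{j,m}$ are symmetric in the two indices and summable along every row. Beyond this, nothing is required except Lemma~\ref{lem:weak-null} and the unitarity of the operators $U_a$. The unit-norm assumption on $s_m$ and $t_m$ is used only to make the diagonal of $G$ equal to $1$.
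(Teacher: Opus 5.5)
Your proof is correct and follows the paper's argument. You choose the $a_m$ inductively with $|a_m|>r_m$, using Lemma~\ref{lem:weak-null} to make the finitely many new cross inner products small, then bound the Gram matrix and pass to the Bessel inequality through synthesis/analysis duality. The one difference is cosmetic: you impose entrywise bounds $\tfrac13 2^{-m-j}$ and control $G-I$ by Schur's test, whereas the paper imposes $\sum_{j<m}\bigl(\abs{\inner{u_m}{u_j}}^2+\abs{\inner{v_m}{v_j}}^2\bigr)\le 4^{-m}$ and controls $G-I$ by its Hilbert--Schmidt norm $\bigl(\sum_m\rho_m^u\bigr)^{1/2}\le\tfrac23$.
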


\begin{proof}
We choose the points $a_m$ inductively. When $m=1$, we take any $a_1\in\B_n$ with $\abs{a_1}>r_1$; for $m\ge 2$, suppose that $a_1,\ldots,a_{m-1}$ have been chosen. By Lemma~\ref{lem:weak-null}, each of the finitely many functions,
\[
a\mapsto\inner{U_as_m}{u_j},
\qquad
a\mapsto\inner{U_at_m}{v_j},
\qquad
j<m,
\]
tends to zero as $|a|\to 1$. Hence we may choose $a_m\in\B_n$ with $|a_m|>r_m$ such that
\[
\sum_{j<m}
\left(
\abs{\inner{u_m}{u_j}}^2
+\abs{\inner{v_m}{v_j}}^2
\right)
\le 4^{-m},
\]
where $u_m=U_{a_m}s_m$ and $v_m=U_{a_m}t_m$ as in the statement. Since each $U_{a_m}$ is unitary, $u_m$ and $v_m$ are unit vectors.

We estimate $\rho_m^u$ for fixed $m$. The terms with $j<m$ sum to at most $4^{-m}$ by the choice of $a_m$. If $j>m$, then $\abs{\inner{u_m}{u_j}}^2=\abs{\inner{u_j}{u_m}}^2$ appears in the sum controlled at stage $j$, so this term is at most $4^{-j}$. Therefore
\[
\rho_m^u
\le 4^{-m}+\sum_{j>m}4^{-j}
=\frac43\,4^{-m}\to 0,
\]
and the same estimate holds for $\rho_m^v$.

It remains to verify the Bessel property. Let $G^u=\big(\inner{u_m}{u_j}\big)_{j,m}$ be the Gram matrix of $(u_m)$ and write $G^u=I+E^u$, where $E^u$ contains the off-diagonal entries. Then
\[
\norm{E^u}_{\mathrm{HS}}^2
=\sum_m\rho_m^u
\le\frac43\sum_{m=1}^\infty 4^{-m}
=\frac49,
\]
so $G^u$ is bounded on $\ell^2$ with $\norm{G^u}\le 1+\norm{E^u}_{\mathrm{HS}}\le\tfrac53$. For a finitely supported scalar sequence $(c_m)$,
\[
\Big\|\sum_m c_mu_m\Big\|^2
=\inner{G^uc}{c}_{\ell^2}
\le\frac53\,\norm{c}_{\ell^2}^2,
\]
so the synthesis map $(c_m)\mapsto\sum_mc_mu_m$ extends to a bounded operator from $\ell^2$ into $A^2_\gamma$. Its adjoint is the analysis map $h\mapsto(\inner{h}{u_m})_m$, and its boundedness is the Bessel inequality
\[
\sum_m\abs{\inner{h}{u_m}}^2\le\frac53\,\norm{h}^2,
\qquad h\in A^2_\gamma.
\]
The same argument applies to $(v_m)$.
\end{proof}

\section{Construction of the symbol}

\begin{proof}[Proof of Theorem~\ref{thm:main}]
\emph{Choice of the data.}
We choose, in this order:
\begin{itemize}
\item[(i)] integers $K_m\to\infty$ with
$\norm{B_\gamma(R_{K_m})}_\infty<2^{-m-4}$
(Lemma~\ref{lem:rank-one-berezin});
\item[(ii)] functions $h_m^0\in C_c^\infty(\B_n)$ with
$\norm{T_{h_m^0}-R_{K_m}}<2^{-m-4}$
(Lemma~\ref{lem:rank-one-realization});
\item[(iii)] radii $r_m$ with $1-\tfrac1m\le r_m<1$ such that
$\norm{h_m^0\circ\varphi_a}_{L^1(dv_\gamma)}<2^{-m}$
whenever $|a|>r_m$ (Lemma~\ref{lem:decay});
\item[(iv)] points $a_m\in\B_n$ with $|a_m|>r_m$ such that
\[
u_m=U_{a_m}e_{K_m},
\qquad
v_m=U_{a_m}1=k_{a_m}
\]
are Bessel sequences of unit vectors with Bessel bound $\tfrac53$ and $\rho_m^u,\rho_m^v\to 0$ (Lemma~\ref{lem:separation}, applied with $s_m=e_{K_m}$ and $t_m=1$).
\end{itemize} 
Note that $|a_m|>r_m\ge 1-\tfrac1m$, so $|a_m|\to 1$. We set
\[
f_m=h_m^0\circ\varphi_{a_m},
\qquad
f=\sum_{m=1}^\infty f_m.
\]
By (iii) and (iv), $\sum_m\norm{f_m}_{L^1(dv_\gamma)}<1$, so the series converges in $L^1(\B_n,dv_\gamma)$ and $f\in L^1(\B_n,dv_\gamma)$.

\medskip
\noindent
\emph{The Toeplitz form of $f$ extends boundedly.}
Since $R_{K_m}=e_{K_m}\otimes 1$ by definition, Lemma~\ref{lem:l1-covariance} gives
\[
T_{f_m}=U_{a_m}T_{h_m^0}U_{a_m}^*,
\qquad
Q_m:=U_{a_m}R_{K_m}U_{a_m}^*=u_m\otimes v_m,
\]
and, because each $U_{a_m}$ is unitary, (ii) yields
\begin{equation}\label{eq:block-error}
\norm{T_{f_m}-Q_m}=\norm{T_{h_m^0}-R_{K_m}}<2^{-m-4}.
\end{equation}
By Lemma~\ref{lem:separation}, the sequence $(v_m)$ has Bessel bound $\tfrac53$ and the synthesis map of $(u_m)$ has norm at most $\bigl(\tfrac53\bigr)^{1/2}$. Hence, for every $h\in A^2_\gamma$ and $M'\ge M$,
\[
\Big\|\sum_{m=M}^{M'}\inner{h}{v_m}u_m\Big\|
\le\Big(\tfrac53\Big)^{1/2}
\Big(\sum_{m\ge M}\abs{\inner{h}{v_m}}^2\Big)^{1/2}
\xrightarrow[M\to\infty]{}0.
\]
Hence the series \[Qh:=\sum_m\inner{h}{v_m}u_m\]converges in $A^2_\gamma$, the partial sums $\sum_{m\le M}Q_m$ converge to $Q$ in the strong operator topology, and $\norm{Q}\le\tfrac53$. By \eqref{eq:block-error}, the series $E:=\sum_m(T_{f_m}-Q_m)$ converges in operator norm with $\norm{E}\le\tfrac1{16}$. Therefore
\[
\sum_{m\le M}T_{f_m}\xrightarrow[M\to\infty]{} A:=Q+E
\]
in the strong operator topology, and $\norm{A}\le\tfrac53+\tfrac1{16}$.

Next we check that $A$ is the bounded extension of the Toeplitz form with symbol $f$. If $p,q$ are holomorphic polynomials, then $p\overline q$ is bounded on
$\B_n$. Since $\sum_mf_m$ converges in $L^1$,
\[
\int_{\B_n}f\,p\overline q\dvgamma
=\sum_m\int_{\B_n}f_m\,p\overline q\dvgamma
=\sum_m\inner{T_{f_m}p}{q}
=\inner{Ap}{q}.
\]
Since the polynomials are dense in $A^2_\gamma$, $A$ is the unique
bounded operator with this property, and we write $T_f=A$.

\medskip
\noindent
\emph{The operator is not compact.}
The vectors $v_m=k_{a_m}$ have unit norm and, since $|a_m|\to 1$, Lemma~\ref{lem:weak-null} (applied with $w=1$) shows that $v_m \rightharpoonup 0$. Since $\norm{v_m}=\norm{u_m}=1$, 
\[
\inner{Qv_m}{u_m}
=1+\sum_{j\ne m}\inner{v_m}{v_j}\inner{u_j}{u_m},
\]
and the Cauchy--Schwarz inequality gives
\[
\abs{\inner{Qv_m}{u_m}-1}
\le(\rho_m^v)^{1/2}(\rho_m^u)^{1/2}\to 0.
\]
By Lemma~\ref{lem:separation},
\[
\rho_m^u,\rho_m^v\le \frac43\,4^{-m},
\]
and hence
\[
(\rho_m^u\rho_m^v)^{1/2}
\le \frac43\,4^{-m}\le \frac13.
\]
Therefore
\[
\begin{aligned}
\norm{T_fv_m}
&\ge \abs{\inner{T_fv_m}{u_m}}\\
&\ge \abs{\inner{Qv_m}{u_m}}-\norm{E}\\
&\ge 1-(\rho_m^u\rho_m^v)^{1/2}-\frac1{16}\\
&\ge \frac{29}{48}.
\end{aligned}
\]
Thus
\[
\inf_m\norm{T_fk_{a_m}}
=\inf_m\norm{T_fv_m}
\ge \frac{29}{48}>0.
\]
Since $v_m=k_{a_m}\rightharpoonup0$, whereas $\norm{T_fv_m}$ is bounded away from zero, $T_f$ is not compact.

\medskip
\noindent
\emph{The Berezin transform vanishes at the boundary.}
For each $m$, Lemma~\ref{lem:l1-covariance} gives
\[
B_\gamma(f_m)=B_\gamma(T_{f_m}).
\]
Using \eqref{eq:berezin-covariance}, the fact that $\varphi_{a_m}$ is a bijection of $\B_n$, and \eqref{eq:block-error}, we obtain
\[
\begin{aligned}
\norm{B_\gamma(f_m)}_\infty
&\le
\norm{B_\gamma(Q_m)}_\infty
+\norm{T_{f_m}-Q_m}\\
&=
\norm{B_\gamma(R_{K_m})}_\infty
+\norm{T_{f_m}-Q_m}\\
&<2^{-m-3}.
\end{aligned}
\]
Thus the series $\sum_{m=1}^\infty B_\gamma(f_m)$ converges uniformly on $\B_n$.

We next show that each term in this series vanishes at the boundary. Since $\varphi_{a_m}$ is an involution, $\operatorname{supp}f_m=\varphi_{a_m}(\operatorname{supp}h_m^0)$ is a compact subset of $\B_n$, hence contained in $s_m\B_n$ for some $s_m<1$. By Lemma~\ref{lem:decay}, $B_\gamma(f_m)(z)\to 0$ as $\abs{z}\to 1$ for each fixed $m$.

We now identify the sum of these transforms with $B_\gamma(f)$. For fixed $z\in\B_n$, the function $|k_z|^2$ is bounded on $\B_n$; indeed,
\[
\sup_{w\in\B_n}|k_z(w)|^2
\le
\frac{(1-|z|^2)^N}{(1-|z|)^{2N}}
=
\left(\frac{1+|z|}{1-|z|}\right)^N.
\]
Since $\sum_m f_m$ converges to $f$ in $L^1(\B_n,dv_\gamma)$,
\[
\begin{aligned}
\abs{
B_\gamma(f)(z)-\sum_{m=1}^M B_\gamma(f_m)(z)
}
&\le
\left(\sup_{w\in\B_n}|k_z(w)|^2\right)
\norm{f-\sum_{m=1}^M f_m}_{L^1(dv_\gamma)}\\
&\longrightarrow 0.
\end{aligned}
\]
Thus
\[
B_\gamma(f)(z)
=
\sum_{m=1}^\infty B_\gamma(f_m)(z)
\qquad (z\in\B_n).
\]

Finally, we fix $\epsilon>0$. We choose $M$ so large that
\[
\sum_{m>M}\norm{B_\gamma(f_m)}_\infty
<\frac{\epsilon}{2}.
\]
Since each of the finitely many functions $B_\gamma(f_1),\ldots,B_\gamma(f_M)$ vanishes at the boundary, there is an $r<1$ such that
\[
\sum_{m=1}^M\abs{B_\gamma(f_m)(z)}
<\frac{\epsilon}{2}
\qquad\text{whenever } |z|>r.
\]
For such $z$,
\[
\begin{aligned}
\abs{B_\gamma(f)(z)}
&\le
\sum_{m=1}^M\abs{B_\gamma(f_m)(z)}
+
\sum_{m>M}\norm{B_\gamma(f_m)}_\infty\\
&<\epsilon.
\end{aligned}
\]
In conclusion, $B_\gamma(f)(z)\to0$ as $|z|\to1.$
\end{proof}

\section{Sharpness of Zorboska's exponent}
\label{sec:zorboska}

Throughout this section, we let $n=1$ and $\gamma=0$, and we write $dA=dv_0$ for normalized area measure on $\mathbb D$. Thus
\[
e_K(z)=\sqrt{K+1}\,z^K,
\qquad K\ge 0,
\]
is the standard orthonormal basis of $A^2(\mathbb D)$.

We first explain why the off-diagonal construction used in Theorem~\ref{thm:main} does not answer Zorboska's more restrictive question.

\begin{proposition}\label{prop:offdiagonal-localization}
For the symbol $f$ and the points $(a_m)$ constructed in the proof of Theorem~\ref{thm:main},
\[
\sup_{a\in\mathbb D}
\norm{T_{f\circ\varphi_a}1}_{L^p(dA)}=\infty
\]
for every $p>2$.
\end{proposition}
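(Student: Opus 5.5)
We take $a=a_m$ and show that $\norm{T_{f\circ\varphi_{a_m}}1}_{L^p(dA)}\gtrsim_p K_m^{1/2-1/p}$, which tends to infinity since $K_m\to\infty$ and $p>2$.

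By Lemma~\ref{lem:l1-covariance}, $T_{f\circ\varphi_{a_m}}=U_{a_m}T_fU_{a_m}$. Since $U_{a_m}1=k_{a_m}=v_m$, the function in question is $g_m:=U_{a_m}T_fv_m$. From the proof of Theorem~\ref{thm:main} we have $T_f=Q+E$ with $\norm{E}\le\frac1{16}$. We also have $\inner{Qv_m}{u_m}=1+O(4^{-m})$, and $\inner{g_m}{e_{K_m}}=\inner{T_fv_m}{U_{a_m}e_{K_m}}=\inner{T_fv_m}{u_m}$. Hence
\[
\abs{\inner{g_m}{e_{K_m}}}\ge \tfrac{29}{48},
\]
and $\norm{g_m}_{A^2}\le \tfrac53+\tfrac1{16}$ is bounded. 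So $g_m$ is a bounded family in $A^2$ whose $K_m$-th normalized Taylor coefficient $\widehat{g_m}(K_m)$ is bounded below. Here $\widehat g(K)=\inner{g}{e_K}$, so the coefficient of $z^K$ is $\sqrt{K+1}\,\widehat g(K)$.

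The key step is to turn a lower bound on a single high-degree coefficient into a lower bound on the $L^p$ norm. We pair $g_m$ against $e_{K_m}$ in $L^2(dA)$ and apply Hölder:
\[
\abs{\inner{g_m}{e_{K_m}}}\le \norm{g_m}_{L^p}\,\norm{e_{K_m}}_{L^{p'}},\qquad p'=\frac{p}{p-1}<2 .
\]
Direct computation gives
\[
\norm{e_K}_{L^{p'}}^{p'}=(K+1)^{p'/2}\int_{\mathbb D}|z|^{Kp'}dA=\frac{2(K+1)^{p'/2}}{Kp'+2}\asymp K^{p'/2-1}.
\]
Therefore $\norm{e_K}_{L^{p'}}\asymp K^{1/2-1/p'}=K^{-(1/2-1/p)}$. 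Combining this with the coefficient lower bound yields
\[
\norm{g_m}_{L^p}\ge \tfrac{29}{48}\,\norm{e_{K_m}}_{L^{p'}}^{-1}\gtrsim_p K_m^{1/2-1/p}\to\infty,
\]
which proves the proposition.

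The step needing care is only the bookkeeping in identifying $\inner{U_{a_m}T_f v_m}{e_{K_m}}$ with $\inner{T_fv_m}{u_m}$, using the self-adjointness of $U_{a_m}$ and the definition $u_m=U_{a_m}e_{K_m}$. After that, the argument is the one-line Hölder/duality estimate. Note that $g_m$ is bounded holomorphic only in $L^2$; if $g_m\notin L^p$ the norm is infinite and the claim is trivial. Otherwise Hölder applies, since the integral pairing with the polynomial $e_{K_m}$ agrees with the $A^2$ inner product.
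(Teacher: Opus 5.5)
Your proposal is correct and follows essentially the same route as the paper. You write $T_{f\circ\varphi_{a_m}}1=U_{a_m}T_fv_m$, use the self-adjointness of $U_{a_m}$ to turn the lower bound $\abs{\inner{T_fv_m}{u_m}}\ge\frac{29}{48}$ into a lower bound on $\inner{U_{a_m}T_fv_m}{e_{K_m}}$, and then apply H\"older's inequality with the explicit value of $\norm{e_{K_m}}_{L^{p'}(dA)}$ to obtain growth of order $K_m^{1/2-1/p}$. The paper's proof is the same, including the remark that the case $U_{a_m}T_fv_m\notin L^p(dA)$ is trivial.
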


\begin{proof}
We retain the notation
\[
u_m=U_{a_m}e_{K_m},
\qquad
v_m=k_{a_m}
\]
from the proof of Theorem~\ref{thm:main}. That proof gives a constant $c>0$ such that
\[
\abs{\inner{T_fv_m}{u_m}}\ge c
\]
for all sufficiently large $m$. By covariance and the self-adjointness of $U_{a_m}$,
\[
G_m:=T_{f\circ\varphi_{a_m}}1
=U_{a_m}T_fv_m
\]
satisfies
\[
\abs{\inner{G_m}{e_{K_m}}}
=\abs{\inner{T_fv_m}{u_m}}
\ge c.
\]
Let $p'=p/(p-1)$. If $G_m\notin L^p(dA)$, there is nothing to prove. Otherwise, H\"older's inequality gives
\[
c\le \norm{G_m}_{L^p(dA)}
\norm{e_{K_m}}_{L^{p'}(dA)}.
\]
For every $q>0$,
\[
\norm{e_K}_{L^q(dA)}^q
=(K+1)^{q/2}\frac{2}{qK+2}.
\]
Since $p'<2$, it follows that
\[
\norm{G_m}_{L^p(dA)}
\gtrsim_p K_m^{1/2-1/p}\longrightarrow\infty. \qedhere
\]
\end{proof}

\subsection{Localized diagonal blocks}

For $K\ge1$, let
\[
P_K=e_K\otimes e_K,
\qquad \text{ so that }
P_Kg=\inner{g}{e_K}e_K.
\]
For a bounded operator $A$ on $A^2(\mathbb D)$, let
\[
\Lambda_p(A)
:=\sup_{a\in\mathbb D}
\norm{U_aAU_a1}_{L^p(dA)}
\]
whenever the right-hand side is finite.

We will repeatedly use the following consequence of the cocycle identity for the automorphic unitaries. Suppose that $A$ commutes with rotations and $b\in\mathbb D$. Then
\begin{equation}\label{eq:radial-localization-covariance}
\norm{U_a(U_bAU_b)U_a1}_{L^p(dA)}
=\norm{U_cAU_c1}_{L^p(dA)},
\qquad |c|=|\varphi_b(a)|.
\end{equation}
Indeed, a direct calculation from the definition of $U_a$ shows that $U_bU_a=\lambda VU_c$, where $|\lambda|=1$ and $V$ is a rotation. Since $U_aU_b=(U_bU_a)^*$, rotations act isometrically on $L^p(dA)$, and $V^*AV=A$, identity \eqref{eq:radial-localization-covariance} follows.

\begin{lemma}\label{lem:diagonal-block-profile}
Let $K\ge1$ and $a\in\mathbb D$, and set $r=|a|$. Then
\begin{equation}\label{eq:diagonal-L3-profile}
\begin{aligned}
L_K(r)^3
&:=\norm{U_aP_KU_a1}_{L^3(dA)}^3\\
&=(K+1)^3r^{3K}(1-r^2)^2
\left(\frac{2}{3K+2}+\frac{2r^2}{3K+4}\right).
\end{aligned}
\end{equation}
Therefore,
\[
\sup_{K\ge1}\Lambda_3(P_K)<\infty,
\]
and, for fixed $K$, $L_K(r)\to0$ as $r\to0$ or $r\to1$. Moreover,
\begin{equation}\label{eq:diagonal-berezin-bound}
\norm{B_0(P_K)}_\infty\le\frac{4}{K+1}.
\end{equation}
\end{lemma}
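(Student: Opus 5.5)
Everything here is explicit computation in the disk with $n=1$, $\gamma=0$, $N=2$. We compute $U_aP_KU_a1$ directly. Since $U_a$ is self-adjoint and $U_a1=k_a$,
\[
U_aP_KU_a1=\inner{k_a}{e_K}\,U_ae_K .
\]
The reproducing property gives $\inner{k_a}{e_K}=(1-r^2)\,\overline{e_K(a)}=(1-r^2)\sqrt{K+1}\,\bar a^K$, whose modulus is $(1-r^2)\sqrt{K+1}\,r^K$. Hence
\[
L_K(r)^3=(K+1)^{3/2}r^{3K}(1-r^2)^3\norm{U_ae_K}_{L^3}^3 .
\]
To compute $\norm{U_ae_K}_{L^3}^3=\int|e_K\circ\varphi_a|^3|k_a|^3\,dA$, I substitute $z=\varphi_a(w)$ via the transformation rule, which gives $\int|e_K(w)|^3|k_a(\varphi_a(w))|^3|k_a(w)|^2\,dA(w)$. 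By \eqref{eq:kernel-identity} this equals $\int|e_K(w)|^3|k_a(w)|^{-1}\,dA(w)$. Now $|k_a(w)|^{-1}=|1-\bar a w|^2/(1-r^2)$, so
\[
\norm{U_ae_K}_{L^3}^3=(1-r^2)^{-1}(K+1)^{3/2}\int|w|^{3K}|1-\bar aw|^2\,dA .
\]
Expanding $|1-\bar aw|^2=1-2\operatorname{Re}(\bar a w)+r^2|w|^2$, the middle term integrates to zero against the radial weight. Using $\int|w|^{s}\,dA=2/(s+2)$, the integral is
\[
\frac{2}{3K+2}+\frac{2r^2}{3K+4},
\]
and multiplying out yields \eqref{eq:diagonal-L3-profile}. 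The same substitution shows the norm depends only on $r$, consistent with rotation invariance.

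For the uniform bound, the factor in parentheses is at most $4/(3K+2)\lesssim K^{-1}$. So $L_K(r)^3\lesssim K^2\,r^{3K}(1-r^2)^2$. Maximizing $r^{3K}(1-r^2)^2$ as in Lemma~\ref{lem:rank-one-berezin}, with maximizer at $r^2=3K/(3K+4)$, gives a bound $\lesssim K^{-2}$ (since $r^{3K}\le1$ and $(1-r^2)^2\sim (4/(3K+4))^2$). Therefore $\sup_K\Lambda_3(P_K)<\infty$. For fixed $K\ge1$ the explicit formula tends to $0$ as $r\to0$ because of the factor $r^{3K}$, and as $r\to1$ because of the factor $(1-r^2)^2$.

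For the Berezin bound, $B_0(P_K)(z)=|\inner{k_z}{e_K}|^2=(K+1)(1-r^2)^2r^{2K}$. With $t=r^2$, maximize $t^K(1-t)^2$: the maximizer is $t=K/(K+2)$, and the maximum is at most $(2/(K+2))^2\le 4/(K+1)^2$. This gives the bound $4/(K+1)$.

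There is no real obstacle. The only delicate point is getting the Jacobian and kernel powers right in the $L^3$ change of variables; I will double-check it against \eqref{eq:kernel-identity} at $K=0$.
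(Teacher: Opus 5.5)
Your proposal is correct and follows essentially the same route as the paper. You pull out $\abs{\inner{k_a}{e_K}}=\sqrt{K+1}(1-r^2)r^K$, change variables through $\varphi_a$ with \eqref{eq:kernel-identity} to reduce to $\int_{\mathbb D}|w|^{3K}|1-\bar a w|^2\,dA$, and maximize $t^K(1-t)^2$ at $t=K/(K+2)$ for the Berezin bound. Your explicit maximization of $r^{3K}(1-r^2)^2$ simply fills in the paper's one-line claim that $(K+1)^2(1-r^2)^2r^{3K}$ is uniformly bounded.
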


\begin{proof}
Since $U_a1=k_a$,
\[
P_KU_a1=\inner{k_a}{e_K}e_K,
\qquad
\abs{\inner{k_a}{e_K}}
=\sqrt{K+1}(1-r^2)r^K.
\]
Using the formula for $U_a$, followed by the change of variables $w=\varphi_a(v)$, gives
\begin{align*}
\norm{U_aP_KU_a1}_{L^3(dA)}^3
&=(K+1)^3r^{3K}(1-r^2)^2
\int_{\mathbb D}|v|^{3K}|1-\overline av|^2\,dA(v)\\
&=(K+1)^3r^{3K}(1-r^2)^2
\left(\frac{2}{3K+2}+\frac{2r^2}{3K+4}\right),
\end{align*}
because the mixed terms integrate to zero. This proves \eqref{eq:diagonal-L3-profile}.

The expression on the right is bounded by a constant multiple of
\[
(K+1)^2(1-r^2)^2r^{3K},
\]
which is uniformly bounded in $K$ and $r$. Its vanishing at $r=0$ and $r=1$ for fixed $K$ is immediate.

Finally,
\[
B_0(P_K)(z)
=(K+1)(1-|z|^2)^2|z|^{2K}.
\]
Maximizing the right-hand side over $|z|$ gives
\[
\norm{B_0(P_K)}_\infty
=(K+1)\left(\frac{2}{K+2}\right)^2
\left(\frac{K}{K+2}\right)^K
\le\frac{4}{K+1}. \qedhere
\]
\end{proof}

\subsection{Realization by integrable symbols}

The next lemma approximates $P_K$ by a Toeplitz operator in both trace norm and the localization seminorm needed above.

\begin{lemma}\label{lem:diagonal-smooth-realization}
For every $K\ge1$ and every $\delta>0$, there is a real radial function $h\in C_c^\infty(\mathbb D)$ such that
\[
T_h=P_K+E,
\]
where $E\ge0$ is trace class and
\[
\norm{E}_{\mathcal S_1}<\delta,
\qquad
\Lambda_3(E)\le C\norm{E}_{\mathcal S_1}.
\]
The constant $C$ is independent of $K$, $h$, and $E$.
\end{lemma}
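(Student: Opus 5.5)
The plan is to imitate the proof of Lemma~\ref{lem:rank-one-realization}, replacing the one-sided derivative $D_K$ of a point mass by the radial operator $(\partial\overline\partial)^K=(\Delta/4)^K$. For holomorphic polynomials $p,q$ with Taylor coefficients $p_j,q_j$ we have
\[
\inner{P_Kp}{q}=(K+1)\,p_K\overline{q_K}
=\frac{K+1}{(K!)^2}\,p^{(K)}(0)\,\overline{q^{(K)}(0)} .
\]
Since $\partial^K$ acts only on $p$ and $\overline\partial^K$ acts only on $\overline q$, this equals $\frac{K+1}{(K!)^2}(\partial\overline\partial)^K(p\overline q)(0)$. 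So the distributional symbol of $P_K$ is a constant multiple of $(\partial\overline\partial)^K\delta_0$.

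\emph{The symbol.} Fix a radial $\rho\in C_c^\infty(\C)$ with $\rho\ge0$, $\operatorname{supp}\rho\subset\frac12\mathbb D$ and $\int\rho\,dv=1$, and let $\rho_\epsilon(z)=\epsilon^{-2}\rho(z/\epsilon)$. Put
\[
h=h_\epsilon=\frac{K+1}{(K!)^2}\,\pi\,(\partial\overline\partial)^K\rho_\epsilon ,
\]
where the factor $\pi$ converts $dA$ to Lebesgue measure. This $h$ is real, radial, smooth, and compactly supported in $\frac{\epsilon}{2}\mathbb D$. Integrating by parts $2K$ times, with no boundary terms, gives for all polynomials $p,q$
\[
\inner{T_hp}{q}=\frac{K+1}{(K!)^2}\int\rho_\epsilon\,(\partial\overline\partial)^K(p\overline q)\,dv .
\]
Because $h$ is radial, $T_h$ is diagonal in $(e_j)$, with eigenvalues $\mu_j=\inner{T_he_j}{e_j}$. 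Taking $p=q=e_j$ and using $(\partial\overline\partial)^K|z|^{2j}=\bigl(j!/(j-K)!\bigr)^2|z|^{2(j-K)}$ for $j\ge K$ (and $0$ for $j<K$) yields:
\begin{itemize}
\item[(a)] $\mu_j=0$ for $j<K$;
\item[(b)] $\mu_K=1$ exactly, since $\int\rho_\epsilon\,dv=1$;
\item[(c)] for $j>K$,
\[
\mu_j=\frac{(K+1)(j+1)}{(K!)^2}\Bigl(\frac{j!}{(j-K)!}\Bigr)^2\int\rho_\epsilon|z|^{2(j-K)}\,dv\ \ge 0 .
\]
\end{itemize}
Hence $E:=T_h-P_K=\sum_{j>K}\mu_jP_j\ge0$.

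\emph{Trace bound.} On $\operatorname{supp}\rho_\epsilon$ we have $|z|\le\epsilon/2$, so $\mu_j\le C_K\,j^{2K+1}(\epsilon/2)^{2(j-K)}$. Therefore
\[
\norm{E}_{\mathcal S_1}=\sum_{j>K}\mu_j\le C_K\sum_{i\ge1}(K+i)^{2K+1}(\epsilon/2)^{2i}\to0
\qquad\text{as }\epsilon\to0 .
\]
Choosing $\epsilon$ small makes $\norm{E}_{\mathcal S_1}<\delta$. The constant $C_K$ may depend on $K$, since only $\epsilon$ is being tuned.

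\emph{Localization bound.} Since $E=\sum_{j>K}\mu_jP_j$ with $\mu_j\ge0$ summable, the series $U_aEU_a1=\sum_j\mu_jU_aP_jU_a1$ converges absolutely in $L^3(dA)$. Indeed each term has norm at most $\sup_{j\ge1}\Lambda_3(P_j)<\infty$ by Lemma~\ref{lem:diagonal-block-profile}, which applies because $j>K\ge1$. The triangle inequality then gives
\[
\Lambda_3(E)\le\Bigl(\sup_{j\ge1}\Lambda_3(P_j)\Bigr)\sum_j\mu_j=C\norm{E}_{\mathcal S_1},
\]
with $C$ independent of $K$, $h$ and $E$. Identifying the $L^3$ limit with the $A^2$ limit is routine, since $L^3$ convergence implies $L^2$ convergence.

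The step requiring the most care is the exact bookkeeping in the integration by parts. One must check that the normalizing constant makes $\mu_K$ exactly $1$, so that no diagonal error at index $K$ of uncontrolled sign appears, and that all error eigenvalues are nonnegative. Both facts rest on radial symmetry and the positivity of $\rho$.
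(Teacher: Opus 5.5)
Your approach is essentially the paper's: a radial mollifier hit by $\Delta^K=4^K(\partial\overline\partial)^K$, integration by parts to diagonalize $T_h$, nonnegativity of the eigenvalues for $j>K$, trace norm tending to $0$ as $\epsilon\to0$, and Minkowski's inequality with $\sup_j\Lambda_3(P_j)<\infty$. However, the normalization, which is exactly the step you flag as delicate, is off by a factor $(K+1)^2$. Your opening identity is wrong. Since $\norm{z^K}^2_{L^2(dA)}=1/(K+1)$, one has $\inner{p}{e_K}=\sqrt{K+1}\,p_K\norm{z^K}^2=p_K/\sqrt{K+1}$, and hence
\[
\inner{P_Kp}{q}=\inner{p}{e_K}\inner{e_K}{q}=\frac{p_K\overline{q_K}}{K+1},
\]
not $(K+1)p_K\overline{q_K}$. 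Your own formula (c), evaluated at $j=K$, gives $\mu_K=(K+1)^2$, which contradicts (b). With your $h$ you therefore get $T_h=(K+1)^2P_K+\sum_{j>K}\mu_jP_j$. Then $T_h-P_K$ has trace norm at least $(K+1)^2-1\ge3$, so the conclusion $\norm{E}_{\mathcal S_1}<\delta$ fails for small $\delta$.

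The repair is immediate. Take
\[
h=\frac{\pi}{(K+1)(K!)^2}\,(\partial\overline\partial)^K\rho_\epsilon ,
\]
which is precisely the paper's symbol $\Delta^K\psi_\epsilon/\bigl((K+1)4^K(K!)^2\bigr)$ with $\psi_\epsilon=\pi\rho_\epsilon$ normalized in $dA$. Then $\mu_K=1$, and for $j>K$
\[
\mu_j=\frac{j+1}{K+1}\binom{j}{K}^2\int\rho_\epsilon|z|^{2(j-K)}\,dv\ \ge0 .
\]
With these eigenvalues, your trace-norm estimate and your $\Lambda_3$ bound go through unchanged.
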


\begin{proof}
We choose a nonnegative radial function $\psi\in C_c^\infty(\mathbb D)$ such that
\[
\int_{\mathbb D}\psi\,dA=1,
\]
and set $\psi_\epsilon(z)=\epsilon^{-2}\psi(z/\epsilon)$. We take $\epsilon>0$ small enough that $\psi_\epsilon$ is supported in $\mathbb D$, and define
\[
h_{K,\epsilon}
=\frac{\Delta^K\psi_\epsilon}
{(K+1)4^K(K!)^2},
\]
where $\Delta$ is the Laplacian. This function is real, radial, smooth, and compactly supported.

Since $h_{K,\epsilon}$ is radial, $T_{h_{K,\epsilon}}$ is diagonal in the basis $(e_j)_{j\ge0}$. We write
\[
T_{h_{K,\epsilon}}e_j=\lambda_j e_j.
\]
Integration by parts and the identity
\[
\Delta^K|z|^{2j}
=4^K\left(\frac{j!}{(j-K)!}\right)^2
|z|^{2(j-K)},
\qquad j\ge K,
\]
give
\[
\lambda_j=0\quad(j<K),
\qquad
\lambda_K=1,
\]
and, for $j>K$,
\begin{equation}\label{eq:diagonal-error-eigenvalues}
\lambda_j
=\frac{j+1}{K+1}\binom{j}{K}^2
\int_{\mathbb D}\psi_\epsilon(z)|z|^{2(j-K)}\,dA(z).
\end{equation}
In particular, $\lambda_j\ge0$.

If $\operatorname{supp}\psi\subset R\mathbb D$ for some $0<R<1$, then
\[
\lambda_{K+\ell}
\le
\frac{K+\ell+1}{K+1}\binom{K+\ell}{K}^2
(R\epsilon)^{2\ell},
\qquad \ell\ge1.
\]
For fixed $K$, the coefficient preceding $(R\epsilon)^{2\ell}$ grows polynomially in $\ell$. Hence
\[
\sum_{j>K}\lambda_j\longrightarrow0
\qquad\text{as }\epsilon\to0.
\]
Thus
\[
T_{h_{K,\epsilon}}=P_K+E_{K,\epsilon},
\qquad
E_{K,\epsilon}=\sum_{j>K}\lambda_jP_j,
\]
where $E_{K,\epsilon}\ge0$ and
\[
\norm{E_{K,\epsilon}}_{\mathcal S_1}
=\sum_{j>K}\lambda_j\longrightarrow0.
\]
Finally, Lemma~\ref{lem:diagonal-block-profile} and Minkowski's inequality imply
\[
\Lambda_3(E_{K,\epsilon})
\le\sum_{j>K}\lambda_j\Lambda_3(P_j)
\le C\norm{E_{K,\epsilon}}_{\mathcal S_1}.
\]
Choosing $\epsilon$ sufficiently small proves the lemma.
\end{proof}

\subsection{Proof of the endpoint theorem}

\begin{proof}[Proof of Theorem~\ref{thm:zorboska-endpoint}]
We choose strictly increasing integers $\ell_m\ge1$ such that
\begin{equation}\label{eq:degree-summability}
\sum_{m=1}^\infty\frac{1}{\ell_m+1}<\infty.
\end{equation}
By Lemma~\ref{lem:diagonal-smooth-realization}, choose real radial functions $h_m^0\in C_c^\infty(\mathbb D)$ such that
\[
T_{h_m^0}=P_{\ell_m}+E_m^0,
\qquad
E_m^0\ge0,
\qquad
\norm{E_m^0}_{\mathcal S_1}<2^{-m}.
\]

Set $\eta_m=2^{-m}$ and
\[
C_m=\left\{c\in\mathbb D:
L_{\ell_m}(|c|)\ge\eta_m\right\}.
\]
Lemma~\ref{lem:diagonal-block-profile} shows that $C_m$ is a compact subset of $\mathbb D$.

We now choose points $b_m\in(0,1)$ tending to $1$. Suppose that $b_1,\ldots,b_{m-1}$ have been chosen, and write
\[
w_j=U_{b_j}e_{\ell_j},
\qquad j<m.
\]
As $b\to1$, the following three facts hold:
\begin{enumerate}
\item $\varphi_b(C_m)$ converges uniformly to the boundary point $1$;
\item $U_be_{\ell_m}\rightharpoonup0$ in $A^2(\mathbb D)$;
\item
$\norm{h_m^0\circ\varphi_b}_{L^1(dA)}\to0$.
\end{enumerate}
The second fact was proved in Lemma~\ref{lem:weak-null}, and the third follows from Lemma~\ref{lem:decay}. We may therefore choose $b_m>1-2^{-m}$ so close to $1$ that
\begin{equation}\label{eq:active-sets-disjoint}
\varphi_{b_m}(C_m)\cap
\bigcup_{j<m}\varphi_{b_j}(C_j)=\varnothing,
\end{equation}
\begin{equation}\label{eq:diagonal-bessel-choice}
\sum_{j<m}
\abs{\inner{U_{b_m}e_{\ell_m}}{w_j}}^2
\le4^{-m},
\end{equation}
and
\begin{equation}\label{eq:diagonal-L1-choice}
\norm{h_m^0\circ\varphi_{b_m}}_{L^1(dA)}<2^{-m}.
\end{equation}
We set
\[
w_m=U_{b_m}e_{\ell_m},
\qquad
S_m=w_m\otimes w_m.
\]

For each $m$, \eqref{eq:diagonal-bessel-choice} and the corresponding estimates with indices greater than $m$ give
\[
\sum_{j\ne m}\abs{\inner{w_m}{w_j}}^2
\le\frac43\,4^{-m}.
\]
In fact, the Gram matrix of $(w_m)$ differs from the identity by a Hilbert--Schmidt matrix. In particular, $(w_m)$ is a Bessel sequence. Therefore
\[
Sg=\sum_{m=1}^\infty\inner{g}{w_m}w_m
\]
converges in $A^2(\mathbb D)$ for every $g$ and defines a bounded positive operator $S$. The sequence $(w_m)$ is weakly null, since it is Bessel, whereas
\[
\inner{Sw_m}{w_m}
=\sum_j\abs{\inner{w_m}{w_j}}^2\ge1.
\]
Thus $S$ is not compact.

We next verify the uniform localization estimate. By \eqref{eq:radial-localization-covariance},
\[
\norm{U_aS_mU_a1}_{L^3(dA)}
=L_{\ell_m}(|\varphi_{b_m}(a)|).
\]
In view of \eqref{eq:active-sets-disjoint}, for each $a\in\mathbb D$ there is at most one index $m$ for which this quantity is at least $\eta_m$. Hence Lemma~\ref{lem:diagonal-block-profile} gives
\begin{equation}\label{eq:S-localization}
\sum_m\norm{U_aS_mU_a1}_{L^3(dA)}
\le C+\sum_m\eta_m
\le C+1,
\end{equation}
uniformly in $a$. In particular, the series converges absolutely in $L^3(dA)$. Its partial sums converge in $A^2$ to $U_aSU_a1$; since $L^3(dA)$ embeds continuously into $L^2(dA)$, the $L^3$ sum has the same limit. Hence
\[
\Lambda_3(S)<\infty.
\]

We define
\[
f_m=h_m^0\circ\varphi_{b_m},
\qquad
f=\sum_{m=1}^\infty f_m.
\]
By \eqref{eq:diagonal-L1-choice}, $f\in L^1(\mathbb D,dA)$, and $f$ is real. Covariance gives
\[
T_{f_m}=S_m+F_m,
\qquad
F_m=U_{b_m}E_m^0U_{b_m}.
\]
The operator
\[
F=\sum_mF_m
\]
is positive and trace class, since
\[
\sum_m\norm{F_m}_{\mathcal S_1}
=\sum_m\norm{E_m^0}_{\mathcal S_1}<\infty.
\]
Moreover, \eqref{eq:radial-localization-covariance} and Lemma~\ref{lem:diagonal-smooth-realization} imply
\begin{equation}\label{eq:F-localization}
\Lambda_3(F)
\le\sum_m\Lambda_3(F_m)
=\sum_m\Lambda_3(E_m^0)
\le C\sum_m\norm{E_m^0}_{\mathcal S_1}<\infty.
\end{equation}

We claim that the Toeplitz form with symbol $f$ has the bounded extension
\[
T_f=S+F.
\]
Indeed, if $p,q$ are holomorphic polynomials, then the $L^1$ convergence of $\sum_mf_m$ gives
\[
\int_{\mathbb D}f(z)p(z)\overline{q(z)}\,dA(z)
=\sum_m\inner{T_{f_m}p}{q}.
\]
The series of matrix elements of $(S_m)$ converges absolutely by the Bessel property and Cauchy--Schwarz, while the series of $(F_m)$ converges in trace norm. The right-hand side is therefore $\inner{(S+F)p}{q}$.
This proves the claim. In particular, $T_f$ is bounded and positive. Moreover, $(w_m)$ is weakly null and
\[
\inner{T_fw_m}{w_m}
=\inner{Sw_m}{w_m}+\inner{Fw_m}{w_m}\ge1,
\]
so $T_f$ is not compact.

We now consider the Berezin transform. By covariance and \eqref{eq:diagonal-berezin-bound},
\[
\norm{B_0(S_m)}_\infty
=\norm{B_0(P_{\ell_m})}_\infty
\le\frac{4}{\ell_m+1}.
\]
Thus \eqref{eq:degree-summability} implies that $\sum_mB_0(S_m)$ converges uniformly. Each $S_m$ has rank one, so its Berezin transform vanishes at the boundary. It follows that $B_0(S)$ vanishes at the boundary. Since $F$ is compact, $B_0(F)$ also vanishes there. Therefore,
\[
B_0(f)=B_0(T_f)=B_0(S+F)\longrightarrow0
\qquad\text{as }|z|\to1.
\]

Finally, covariance, \eqref{eq:S-localization}, and \eqref{eq:F-localization} give
\[
\sup_{a\in\mathbb D}
\norm{T_{f\circ\varphi_a}1}_{L^3(dA)}
=\Lambda_3(T_f)<\infty.
\]
Because $f$ is real, $T_{\overline f\circ\varphi_a} = T_{f\circ\varphi_a},$ so the same $L^3$ estimate holds for the second term in \eqref{eq:zorboska-condition}. Since $dA$ is a probability measure, the two estimates remain valid in $L^p(dA)$ for every $2<p\leq3$.
\end{proof}
\section{Symbols outside the weighted Sobolev scale}

\begin{proposition}\label{prop:sobolev-obstruction}
Let $\nu(z)=1-|z|^2$. Suppose that
\[
g\in L^1(\mathbb D,dA)\cap W_\nu^{-m,\infty}(\mathbb D)
\]
for some $m\in\mathbb N_0$. Then the Toeplitz form with symbol $g$ has a bounded extension $T_g$ on $A^2(\mathbb D)$, and
\[
T_g\text{ is compact on }A^2(\mathbb D)
\quad\Longleftrightarrow\quad
B_0(g)(z)\longrightarrow0
\qquad\text{as }|z|\to1.
\]
Therefore, the symbol supplied by Theorem~\ref{thm:main} when $n=1$ and $\gamma=0$, and the symbol in Theorem~\ref{thm:zorboska-endpoint}, lie outside $W_\nu^{-m,\infty}(\mathbb D)$ for every finite $m$.
\end{proposition}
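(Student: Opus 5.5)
The plan is to show that, for $g$ in this intersection, $T_g$ lies in the Toeplitz algebra $\mathcal T_0$. The stated equivalence then follows from the Axler--Zheng/Su\'arez theorem for $\mathcal T_0$ together with $B_0(T_g)=B_0(g)$ from Lemma~\ref{lem:l1-covariance}. The final assertion follows from the equivalence: the symbols of Theorem~\ref{thm:main} (with $n=1$, $\gamma=0$) and of Theorem~\ref{thm:zorboska-endpoint} are integrable, have bounded Toeplitz forms and vanishing Berezin transform, yet give noncompact operators. So they cannot lie in $W_\nu^{-m,\infty}(\mathbb D)$ for any $m$.

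\emph{Step 1: integration by parts.} By \cite[Definition~2.3]{PeralaTaskinenVirtanen2011}, I will write $g$ as a finite sum, as distributions on $\mathbb D$,
\[
g=\sum_{k+j\le m}\partial^k\overline\partial^{\,j}\bigl(\nu^{k+j}G_{kj}\bigr),
\qquad G_{kj}\in L^\infty(\mathbb D).
\]
I will adjust indices if the definition is phrased with real derivatives; the weight $\nu^{|\alpha|}$ attached to a derivative of order $|\alpha|$ is the essential feature. For holomorphic polynomials $p,q$ the function $p\overline q$ is smooth up to the boundary. Since $g\in L^1$, the form $\int g\,p\overline q\,dA$ equals the distributional pairing. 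I will justify this by pairing against $p\overline q$ times a cutoff $\chi_r$ and letting $r\to1$, using dominated convergence on the $L^1$ side. The weights $\nu^{k+j}$ kill the boundary terms on the distributional side. Integration by parts then gives
\[
\int_{\mathbb D} g\,p\overline q\,dA=\sum_{k+j\le m}(-1)^{k+j}\int_{\mathbb D}\nu^{k+j}G_{kj}\,p^{(j)}\,\overline{q^{(k)}}\,dA
\]
(or with $k$ and $j$ interchanged, depending on conventions). Each summand is a form $\langle A_{kj}p,q\rangle$. Boundedness of $A_{kj}$ follows from Cauchy--Schwarz and the standard norm equivalence $\|\nu^{j}p^{(j)}\|_{L^2(dA)}\lesssim\|p\|_{A^2}$. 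This gives the bounded extension $T_g=\sum(-1)^{k+j}A_{kj}$.

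\emph{Step 2: each $A_{kj}$ lies in $\mathcal T_0$.} The reproducing formula gives the kernel
\[
\langle A_{kj}K_x,K_z\rangle=\int_{\mathbb D}\nu(w)^{k+j}G_{kj}(w)\,\partial_w^{\,j}K_x(w)\,\overline{\partial_w^{\,k}K_z(w)}\,dA(w),
\]
so that
\[
|\langle A_{kj}k_x,k_z\rangle|\lesssim\|G_{kj}\|_\infty(1-|x|^2)(1-|z|^2)\int_{\mathbb D}\frac{(1-|w|^2)^{k+j}}{|1-\overline xw|^{2+j}\,|1-\overline zw|^{2+k}}\,dA(w).
\]
I will bound this with the Forelli--Rudin estimates \cite{Zhu2005} to obtain the sufficient-localization bound of Isralowitz--Mitkovski--Wick:
\[
|\langle A_{kj}k_x,k_z\rangle|\lesssim\Bigl(\frac{(1-|x|^2)(1-|z|^2)}{|1-\overline xz|^2}\Bigr)^{\beta}
\]
for some $\beta>\tfrac12$. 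A simpler route would be to check directly Xia's weak localization condition for $\mathcal A_s$, namely uniform integrability of $|\langle A k_x,k_z\rangle|$ against $\bigl((1-|z|^2)/(1-|x|^2)\bigr)^{s}$-type weights, plus vanishing of the tails outside large Bergman balls. The same condition is needed for $A_{kj}^*$, which has the same form with $k,j$ swapped. Then \cite[Theorem~1.3]{Xia2015} places $A_{kj}$ in $\mathcal T_0$.

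\emph{Step 3: conclusion.} Summing over $(k,j)$ gives $T_g\in\mathcal T_0$, and \cite[Theorem~2.2]{AxlerZheng1998} or \cite[Theorem~9.5]{Suarez2007} yields the equivalence.

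I expect Step~2 to be the main obstacle. The naive Forelli--Rudin integral above produces only an exponent $\beta$ near the borderline, and possibly a logarithmic factor when $k\neq j$. I would try to get a decay exponent strictly above the threshold by redistributing the weight, splitting $\nu^{k+j}=\nu^{k+1/2-\epsilon'}\,\nu^{j-1/2+\epsilon'}$ before applying the estimate. If that fails, I would instead realize $A_{kj}$ directly as $D_k^{*}T_{G_{kj}}D_j$. Here $D_j:p\mapsto\nu^{j}p^{(j)}$ maps into $L^2$, and I would check that $P M_{\nu^j}\partial^j$ is itself weakly localized, so $A_{kj}$ is a product of elements of $\mathcal T_0$.
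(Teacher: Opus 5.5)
Your argument is correct, but it takes a different route from the paper. The paper never decomposes $g$. It quotes the Per\"al\"a--Taskinen--Virtanen boundedness theorem, and it proves by a chain-rule estimate on the predual $W^{m,1}_\nu$ that $\norm{g\circ\varphi_a}_{W_\nu^{-m,\infty}}\le C_m\norm{g}_{W_\nu^{-m,\infty}}$ uniformly in $a$. It identifies their operator with the Toeplitz form by the same cutoff argument you sketch. It then uses boundedness on $A^4$ to get $\sup_a\norm{T_{g\circ\varphi_a}1}_{L^4(dA)}<\infty$, and likewise for $\overline g$. This is Zorboska's condition with $p=4>3$, so her Theorem~4.2 gives compactness from boundary vanishing.

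You instead integrate by parts by hand, get boundedness on $A^2$ from $\norm{\nu^jp^{(j)}}_{L^2}\lesssim\norm{p}_{A^2}$, prove off-diagonal decay of $\langle A_{kj}k_x,k_z\rangle$, and conclude with Xia's theorem and Su\'arez's compactness theorem. Your route needs only $A^2$ estimates. It dispenses with the Sobolev covariance lemma, because a bound in terms of $|\langle k_x,k_z\rangle|$ is automatically uniform under automorphisms. It also yields the stronger structural statement $T_g\in\mathcal T_0$. The paper's route avoids pointwise kernel estimates and ties the proposition to Zorboska's criterion, whose endpoint the preceding section shows to be sharp.

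The step you flag as the main obstacle is not one. Set $s=k+j$, $r=k+2$, $t=j+2$. The two-point estimate $\int_{\mathbb D}(1-|w|^2)^s|1-\overline xw|^{-r}|1-\overline zw|^{-t}\,dA(w)\lesssim|1-\overline xz|^{s+2-r-t}$, valid for $r,t<s+2<r+t$, applies whenever $k,j\ge1$. It gives $|\langle A_{kj}k_x,k_z\rangle|\lesssim\norm{G_{kj}}_\infty|\langle k_x,k_z\rangle|$, so $\beta=1$.

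A logarithm appears only when exactly one of $k,j$ vanishes, not when $k\ne j$. The extra factor is $O(\log(2e/|\langle k_x,k_z\rangle|))$, which costs only an arbitrarily small loss in $\beta$. The case $k=j=0$ gives $T_{G_{00}}$ itself. The resulting bound is symmetric in $x$ and $z$ and has $\beta>1/2$, so it yields all of Xia's conditions for both $A_{kj}$ and $A_{kj}^*$, including the tail condition. No reweighting or factorization is needed.

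One citation fix: Axler and Zheng's Theorem~2.2 covers only finite sums of finite products of Toeplitz operators with bounded symbols, which $T_g$ is not known to be. You need Su\'arez's theorem for the full algebra, or the corresponding compactness results for weakly localized operators.
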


\begin{proof}
We combine the boundedness theorem of Per\"al\"a, Taskinen, and Virtanen with Zorboska's compactness theorem. We first verify that the weighted negative Sobolev norm remains uniformly controlled when the symbol is composed with a disk automorphism.

Recall that
\[
\norm{u}_{W_\nu^{m,1}}
=
\sum_{|\alpha|\leq m}
\int_{\mathbb D}
\abs{D^\alpha u(z)}\nu(z)^{|\alpha|}\,dA(z),
\]
and that $W_\nu^{-m,\infty}$ is isometrically the dual of $W_\nu^{m,1}$; see \cite[Definition~2.3 and Lemma~2.4]{PeralaTaskinenVirtanen2011}. For $a\in\mathbb D$, we put
\[
J_a=|\varphi_a'|^2,
\qquad
\mathcal C_a u=J_a(u\circ\varphi_a).
\]
Since $\varphi_a$ is an involution, a change of variables gives
\begin{equation}\label{eq:pullback-pairing}
\int_{\mathbb D}u(z)g(\varphi_a(z))\,dA(z)
=
\int_{\mathbb D}(\mathcal C_a u)(w)g(w)\,dA(w).
\end{equation}
Thus $\mathcal C_a$ is the operator on $W_\nu^{m,1}$ dual to the pullback $g\mapsto g\circ\varphi_a$, in agreement with the convention in \cite[Section~3.3]{PeralaTaskinenVirtanen2011Problems}.

We claim that
\begin{equation}\label{eq:weighted-sobolev-covariance}
\norm{\mathcal C_a u}_{W_\nu^{m,1}}
\leq C_m\norm{u}_{W_\nu^{m,1}},
\end{equation}
where $C_m$ is independent of $a$. Indeed,
\[
\nu(\varphi_a(z))=\nu(z)|\varphi_a'(z)|,
\qquad
J_a(z)=\frac{(1-|a|^2)^2}{|1-\overline a z|^4},
\]
and $\nu(z)\leq2|1-\overline a z|$. Hence, for every real multiindex $\alpha$ with $j=|\alpha|\geq1$,
\[
\nu(z)^j|D^\alpha\varphi_a(z)|
\leq C_j\nu(\varphi_a(z)),
\qquad
\nu(z)^j|D^\alpha J_a(z)|
\leq C_jJ_a(z),
\]
with $C_j$ independent of $a$. The chain and product rules give, for $|\alpha|\leq m$,
\[
\nu(z)^{|\alpha|}
\abs{D^\alpha(\mathcal C_a u)(z)}
\leq
C_mJ_a(z)
\sum_{|\beta|\leq|\alpha|}
\nu(\varphi_a(z))^{|\beta|}
\abs{D^\beta u(\varphi_a(z))}.
\]
Indeed, in each term containing $D^\beta u(\varphi_a)$, exactly $|\beta|$ derivatives of $\varphi_a$ occur, and their positive orders sum to the number of derivatives falling on $u\circ\varphi_a$. Integrating and using $J_a(z)dA(z)=dA(\varphi_a(z))$ proves \eqref{eq:weighted-sobolev-covariance}, first for $u\in C_c^\infty(\mathbb D)$ and then, by \cite[Lemma~2.2]{PeralaTaskinenVirtanen2011}, for every $u\in W_\nu^{m,1}$.

By the isometric duality in \cite[Lemma~2.4]{PeralaTaskinenVirtanen2011}, dualizing \eqref{eq:weighted-sobolev-covariance} through \eqref{eq:pullback-pairing} gives, with the same constant,
\begin{equation}\label{eq:negative-sobolev-covariance}
\sup_{a\in\mathbb D}
\norm{g\circ\varphi_a}_{W_\nu^{-m,\infty}}
\leq
C_m\norm{g}_{W_\nu^{-m,\infty}}.
\end{equation}

We next identify the operators furnished by \cite[Theorem~3.1]{PeralaTaskinenVirtanen2011} with the Toeplitz forms used here. Suppose that
\[
h\in L^1(\mathbb D,dA)\cap W_\nu^{-m,\infty}(\mathbb D).
\]
Let $u\in C^\infty(\overline{\mathbb D})$, and write $\langle u,h\rangle_\nu$ for the corresponding dual pairing. We choose $\delta_j\to 0$ and $\chi_j\in C_c^\infty(\mathbb D)$ such that $0\leq\chi_j\leq1$, $\chi_j=1$ on $\{|z|\leq1-2\delta_j\}$, $\chi_j=0$ on $\{|z|\geq1-\delta_j\}$, and
\[
\abs{D^\gamma\chi_j}\leq C_\gamma\delta_j^{-|\gamma|}
\qquad (|\gamma|\leq m).
\]
Then $\chi_j u\to u$ in $W_\nu^{m,1}$. Indeed, the derivatives of $\chi_j$ are supported where $\nu\sim\delta_j$, and the product rule gives
\[
\nu^{|\alpha|}
\abs{D^\gamma\chi_j}\abs{D^\beta u}
\leq
C_u\delta_j^{|\alpha|-|\gamma|}
=
C_u\delta_j^{|\beta|}
\]
whenever $\beta+\gamma=\alpha$; integration over the transition annulus supplies a further factor $O(\delta_j)$. The terms outside the transition annulus are handled directly by the shrinking support of $1-\chi_j$.

Moreover, for $v\in C_c^\infty(\mathbb D)$, the dual pairing $\langle v,h\rangle_\nu$ is the distributional pairing and hence equals $\int_{\mathbb D}hv\,dA$, since $h\in L^1(\mathbb D,dA)$. Therefore
\[
\langle u,h\rangle_\nu
=
\lim_{j\to\infty}\langle\chi_j u,h\rangle_\nu
=
\lim_{j\to\infty}\int_{\mathbb D}h\,\chi_j u\,dA
=
\int_{\mathbb D}hu\,dA,
\]
where the last equality follows by dominated convergence.

Let $S_h$ be the bounded operator on $A^2(\mathbb D)$ furnished by \cite[Theorem~3.1]{PeralaTaskinenVirtanen2011}. For a holomorphic polynomial $p$ and fixed $z\in\mathbb D$, apply the preceding identity to
\[
u(\zeta)=\frac{p(\zeta)}{(1-z\overline\zeta)^2}.
\]
Formula~(3.1) of that paper then gives
\[
(S_hp)(z)
=
\int_{\mathbb D}
\frac{h(\zeta)p(\zeta)}
     {(1-z\overline\zeta)^2}\,dA(\zeta).
\]
Comparing Taylor coefficients shows that, for holomorphic polynomials
$p,q$,
\[
\inner{S_hp}{q}
=
\int_{\mathbb D}h\,p\overline q\,dA.
\]
Thus $S_h$ is precisely the bounded extension $T_h$ of the Toeplitz form used here. Since formula~(3.1) is independent of the Bergman exponent, the same identification applies to the operators on $A^4$ used below. The same cutoff argument applied to $u=|k_z|^2$ also gives
\[
\widetilde h(z)
=
\langle |k_z|^2,h\rangle_\nu
=
\int_{\mathbb D}h\,|k_z|^2\,dA
=
B_0(h)(z),
\]
where $\widetilde h$ denotes the distributional Berezin transform used in \cite[Section~3.3]{PeralaTaskinenVirtanen2011Problems}.

We now apply the boundedness theorem. For every $h\in W_\nu^{-m,\infty}$ and every $1<p<\infty$, \cite[Theorem~3.1]{PeralaTaskinenVirtanen2011} gives
\[
\norm{T_h:A^p\to A^p}
\leq
C_{m,p}\norm{h}_{W_\nu^{-m,\infty}}.
\]
For each fixed $a\in\mathbb D$, the function $g\circ\varphi_a$ belongs to $L^1(\mathbb D,dA)$, so the preceding identification applies. Taking $p=4$ and $h=g\circ\varphi_a$, and using $\norm{1}_{A^4}=1$, we obtain
\begin{align*}
\norm{T_{g\circ\varphi_a}1}_{L^4(dA)}
&\leq
\norm{T_{g\circ\varphi_a}:A^4\to A^4}\norm{1}_{A^4}\\
&\leq
C_{m,4}\norm{g\circ\varphi_a}_{W_\nu^{-m,\infty}}\\
&\leq
C_{m,4}C_m\norm{g}_{W_\nu^{-m,\infty}}.
\end{align*}
The right-hand side is independent of $a$. Complex conjugation preserves $W_\nu^{-m,\infty}$, so the same argument applied to $\overline g$ gives
\[
\sup_{a\in\mathbb D}
\norm{T_{\overline g\circ\varphi_a}1}_{L^4(dA)}
<\infty.
\]
Thus \eqref{eq:zorboska-condition} holds with $p=4$.

Theorem~3.1 of \cite{PeralaTaskinenVirtanen2011}, applied with $p=2$, gives a bounded operator $S_g$ on $A^2(\mathbb D)$. The preceding identity for its polynomial matrix coefficients shows that $S_g$ is the bounded extension $T_g$ of the Toeplitz form with symbol $g$. This proves the asserted boundedness. If $B_0(g)$ vanishes at the boundary, then \cite[Theorem~4.2]{Zorboska2003} implies that $T_g$ is compact.

Conversely, suppose that $T_g$ is compact. By Lemma~\ref{lem:weak-null}, $k_z\rightharpoonup0$ as $|z|\to1$, and hence
\[
\norm{T_gk_z}_{A^2}\longrightarrow0.
\]
Since $k_z$ is bounded and holomorphic for each fixed $z$, Lemma~\ref{lem:l1-covariance} gives
\[
\abs{B_0(g)(z)}
=
\abs{\inner{T_gk_z}{k_z}}
\leq
\norm{T_gk_z}_{A^2}
\longrightarrow0.
\]
This proves the equivalence.

Finally, both disk symbols constructed here have boundary vanishing of their Berezin transforms and induce noncompact operators. If either symbol belonged to $W_\nu^{-m,\infty}$ for some finite $m$, then its Berezin transform which vanishes at the boundary, together with the implication just proved, would force its Toeplitz operator to be compact, a contradiction.
\end{proof}

\begin{remark}[Relation to Problems~1 and~8]Problem~1 of \cite{PeralaTaskinenVirtanen2011Problems} asks broadly for a characterization of bounded Toeplitz operators. Its final, more specific question asks whether membership in $W_\nu^{-m,\infty}$ for some finite $m$ is necessary for boundedness when the symbol lies in $L^1$. The disk examples give a negative answer to this specific question on $A^2(\mathbb D)$: their symbols lie in $L^1$ and induce bounded operators, but belong to none of the spaces $W_\nu^{-m,\infty}$.

Problem~8 of the same paper assumes $g\in W_\nu^{-m,\infty}$ and asks whether boundary vanishing of its distributional Berezin transform forces a representation
\[
g=\sum_{|\alpha|\leq m}(-1)^{|\alpha|}D^\alpha b_\alpha
\]
whose coefficients satisfy
\[
\lim_{r\to1} \operatorname*{ess\,sup}_{r<|z|<1} \nu(z)^{-|\alpha|}|b_\alpha(z)|=0.
\]
for every $|\alpha|\le m$. 
When $g$ also lies in $L^1$, the calculation above shows that its distributional Berezin transform is precisely $B_0(g)$: taking $u=|k_z|^2$ gives
\[
\widetilde g(z)
= \langle |k_z|^2,g\rangle_\nu
= \int_{\mathbb D}g\,|k_z|^2\,dA
= B_0(g)(z).
\]
Since compact operators have Berezin transforms that vanish at the boundary, the proposition yields, for every \[ g\in L^1(\mathbb D,dv_0)\cap W_\nu^{-m,\infty}(\mathbb D), \] the equivalence \[ T_g\text{ is compact on }A^2(\mathbb D) \quad\Longleftrightarrow\quad \widetilde g(z)=B_0(g)(z)\longrightarrow0 \quad\text{as }|z|\to 1. \]
This gives a necessary and sufficient compactness criterion on $A^2$ within the globally integrable subclass of $W_\nu^{-m,\infty}$, but does not produce the representation requested in Problem~8.  The symbols constructed here lie outside every such Sobolev space, so they do not answer that representation question.
\end{remark}

\section{Concluding remarks}\label{sec:concluding}
\begin{remark}[Relation to \cite{HeCao2013}]
We next check that the bounded extension of a Toeplitz form agrees with the densely defined Toeplitz operator used in \cite{HeCao2013}. Let $P$ denote the Bergman projection, and let $h\in L^1(\B_n,dv_\gamma)$ be such that its Toeplitz form has a bounded extension $T_h$. For $g\in H^\infty(\B_n)$ and $w\in\B_n$, the functions $g$ and $K_w$ are bounded and holomorphic, so Lemma~\ref{lem:l1-covariance} gives
\[
(T_hg)(w)=\inner{T_hg}{K_w}
=\int_{\B_n}\frac{h(z)g(z)}{(1-\inner{w}{z})^N}\dvgamma(z)
=P(hg)(w).
\]
Thus $T_h$ agrees on $H^\infty(\B_n)$ with the operator $g\mapsto P(hg)$ of \cite{HeCao2013}. Since $H^\infty(\B_n)$ contains the holomorphic polynomials, it is dense in $A^2_\gamma$, so the latter operator is bounded with bounded extension $T_h$.

Taking $h=f$, the symbol $f$ satisfies the hypotheses of \cite[Theorem~2.8]{HeCao2013} exactly as stated there: $f\in L^1(\B_n,dv_\gamma)$, $T_f$ is bounded on $A^2_\gamma$, and $B_\gamma(f)$ vanishes at the boundary. Since $T_f$ is not compact, and since \cite{HeCao2013} assumes $n\geq2$ throughout, this contradicts the conclusion of \cite[Theorem~2.8]{HeCao2013} in its full stated range.

The estimate above also shows that \cite[Lemma~2.7]{HeCao2013} fails at the right endpoint of its stated range. By Lemma~\ref{lem:l1-covariance}, the Toeplitz form with symbol $f\circ\varphi_a$ has the bounded extension $T_{f\circ\varphi_a}=U_aT_fU_a^*$, which, by the identification above, is the corresponding operator of \cite{HeCao2013}. Since $U_a$ is a self-adjoint unitary and $U_a1=k_a$, \[ \norm{T_{f\circ\varphi_a}1}_{A^2_\gamma} =\norm{U_aT_fk_a}_{A^2_\gamma} =\norm{T_fk_a}_{A^2_\gamma}, \] which is bounded away from zero along $(a_m)$. This contradicts the case $q=2$ of \cite[Lemma~2.7]{HeCao2013}. Our example does not refute the cases $1\le q<2$, which are the ones invoked in the proof of \cite[Theorem~2.8]{HeCao2013}.
\end{remark}

\begin{remark}[Historical note on Zorboska's question] Zorboska asked whether her compactness theorem remains valid when condition~\eqref{eq:zorboska-condition} is assumed for some $p>2$, rather than for some $p>3$.  A positive answer was announced in a 2007 preprint of Agbor \cite{Agbor2007}.  The author withdrew the preprint in 2012, and the result was never published. Theorem~\ref{thm:zorboska-endpoint} gives a negative answer for Toeplitz operators with $L^1$ symbols, already at the endpoint $p=3$. \end{remark}

\begin{remark}\label{rem:not-in-algebra} 
The operator $T_f$ of Theorem~\ref{thm:main} does not belong to the Toeplitz algebra $\mathcal T_\gamma$: otherwise the compactness theorem \cite{Suarez2007,MitkovskiSuarezWick2013} would force $T_f$ to be compact. More generally, Hagger proved that a bounded operator on $A^2_\gamma(\B_n)$ is compact if and only if it is band-dominated and its Berezin transform vanishes at the boundary \cite[Theorem~A]{Hagger2019}. Hence the operator constructed in Theorem~\ref{thm:main} is not band-dominated. 

In particular $f\notin L^\infty(\B_n,dv_\gamma)$, since bounded symbols give operators in $\mathcal T_\gamma$. Thus every example of this type must have an essentially unbounded symbol; our derivative-of-delta approximations provide one way to construct such a symbol.
\end{remark}
\section*{Acknowledgments} The author thanks Antti Per\"al\"a for helpful comments on an earlier version of the manuscript and, in particular, for drawing his attention to the work of Rozenblum and Vasilevski and to the literature on weak localization and the Toeplitz algebra.

\end{document}